\numberwithin{equation}{section}  \makeatletter 
\def\ps@pprintTitle{%
 \let\@oddhead\@empty
 \let\@evenhead\@empty
 \def\@oddfoot{\centerline{\thepage}}%
 \let\@evenfoot\@oddfoot}
\newcommand{\norm}[1]{\left\Vert#1\right\Vert}
\newcommand{\scal}[1]{\left<#1\right>}
\newtheorem{theorem}{Theorem}[section]
\newtheorem{proposition}[theorem]{Proposition}
\newtheorem{defn}[theorem]{Definition}
\newtheorem{lemma}[theorem]{Lemma}
\newtheorem{remark}[theorem]{Remark}
\newcommand{\Hq}{\mathbb H}
\newcommand{\Sq}{\mathbb S}
\newcommand{\R}{\mathbb{R}}      
\newcommand{\C}{\mathbb{C}}
\begin{document}
\begin{frontmatter}
\title{A quaternionic analogue of the Segal-Bargmann transform }
\author{K. Diki}   
\author{A. Ghanmi}     
 \address{P.D.E. and Spectral Geometry,
          Laboratory of Analysis and Applications - URAC/03,
          Department of Mathematics, P.O. Box 1014,  Faculty of Sciences,
          Mohammed V University in Rabat, Morocco}
\begin{abstract}
The Bargmann-Fock space of slice hyperholomorphic functions is recently introduced by Alpay, Colombo, Sabadini and Salomon. In this paper, we reconsider this space and present a direct proof of its independence of the slice. We also introduce a quaternionic analogue of the classical Segal-Bargmann transform and discuss some of its basic properties. The explicit expression of its inverse is obtained and the connection to the left one-dimensional quaternionic Fourier transform is given.
\end{abstract}



\begin{keyword}
Slice regular functions; Slice hyperholomorphic Bargmann-Fock space; Quaternionic Segal-Bargmann transform; Left one-dimensional quaternionic Fourier transform.
\end{keyword}

\end{frontmatter}

\section{Introduction} 
In 2014,  Alpay, Colombo, Sabadini and Salomon have introduced a quaternionic analogue of the classical Bargmann-Fock space in the setting of the new theory of slice regular functions (see \cite{AlpayColomboSabadini2014}). More precisely, they considered
  $$ \mathcal{F}^{2,\nu}_{slice}(\Hq) = \mathcal{SR}(\Hq) \cap L^2(\C_I;e^{-\nu|q|^2}d\lambda_I(q)),$$
  where $ \mathcal{SR}(\Hq)$ is the space of (slice) regular $\Hq$-valued functions on $\Hq$ and $\C_I$ is a slice in $\Hq$ associated to given $I\in \mathbb{S}=\lbrace{q\in{\Hq};q^2=-1}\rbrace$. They proved that $\mathcal{F}^{2,\nu}_{slice}(\Hq)$ is independent of the choice of the imaginary unit $I\in \mathbb{S}$ and is a reproducing kernel Hilbert space.

 In the present paper, we will reconsider $\mathcal{F}^{2,\nu}_{slice}(\Hq)$ and review some of its properties, although some new results are also obtained.
 Moreover, we introduce and study in some details a quaternionic analogue of the classical Segal-Bargmann transform that maps isometrically the classical Hilbert space on the real line to the slice hyperholomorphic Bargmann-Fock space.  We also yield an integral representation of its inverse and discuss some interesting results connecting this transform to the left one-dimensional quaternionic Fourier transform \cite{EllLeBihanSangwine2014}
    $$\mathcal{F}_I(\psi)(x) := \int_{\R} e^{I x y} \psi(y) dy  ; \quad I^2=-1 .$$

  To present these ideas and others, we are going to adopt the following structure: in Section 2 we collect some useful backgrounds from the theory of slice regular functions. Section 3 consists of a direct proof of the space $\mathcal{F}^{2,\nu}_{slice}(\Hq)$ being independent of the slice. Section 4
    is devoted to the introduction and the investigation of the basic properties of the quaternionic Segal-Bargmann transform. Its inverse transform
    and connection to a like one-dimensional quaternionic Fourier transform are also discussed in this section. In the last section we collect some concluding remarks on the full hyperholomorphic Bargmann-Fock space.


\section{Preliminaries on slice regular functions}
The theory of quaternionic slice regular functions has been introduced quite recently by Gentilli and Struppa in \cite{GentiliStruppa06} (see also \cite{GentiliStruppa07}). Since then, it was extensively studied and analogies of some known classical theorems in complex analysis theory have been developed. It has found many interesting applications in operator theory \cite{ColomboSabadiniStruppa2011}. This new theory contains polynomials and power series with quaternionic coefficients, contrary to the Fueter theory of regular functions defined by means of the Cauchy-Riemann-Fueter differential operator.
  The meeting point between the two function theories comes from an idea of Fueter in the thirties and next developed later by Sce and by Qian.
 This connection holds in any odd dimension (and in quaternionic case) and has been explained in \cite{ColomboSabadiniSommen2010} in the language of slice regular functions with values in the quaternions and slice monogenic functions with values in a Clifford algebra.
 The inverse map has been studied in \cite{ColomboSabadiniSommen2011} and still holds in any odd dimension. 
To make the paper self-contained, we review in this section some basic mathematical concepts relevant to slice regular functions. For more details, we refer the readers to  \cite{GentiliStruppa06,GentiliStruppa07,ColomboSabadiniStruppa2011,GentiliStoppatoStruppa2013}.

 Let $\Hq$ denote the quaternion algebra with its standard basis $\lbrace{1,i,j,k}\rbrace$ satisfying the Hamiltonian multiplication $i^2=j^2=k^2=ijk=-1$, $ij=-ji=k$, $jk=-kj=i$ and $ki=-ik=j$.
 For $q\in{\Hq}$, we write $q=x_0+x_1i+x_2j+x_3k$
 with $x_0,x_1,x_2,x_3\in{\mathbb{R}}$. With respect to the quaternionic conjugate defined to be $\overline{q}=x_0-x_1i-x_2j-x_3k=Re(q)-Im(q)$, we have
 $\overline{ pq }= \overline{q}\, \overline{p}$ for $p,q\in \Hq$. The modulus of $q$ is defined to be $\vert{q}\vert=\sqrt{q\overline{q}}=\sqrt{x_0^2+x_1^2+x_2^2+x_3^2}$. In particular, we have $\vert{Im(q)}\vert=\sqrt{x_1^2+x_2^2+x_3^2}$.
 Notice for instance that the unit sphere $S^2=\lbrace{q\in{Im\Hq}; \vert{Im(q)}\vert=1}\rbrace$ in $Im\Hq$ can be identified with $\mathbb{S}=\lbrace{q\in{\Hq};q^2=-1}\rbrace$, the set of imaginary units.
  Moreover, any $q\in \Hq\setminus \R$ can be rewritten in a unique way as $q=x+I y$ for some real numbers $x$ and $y>0$, and imaginary unit $I\in \mathbb{S}$.

 For every given $I\in{\mathbb{S}}$, we define the slice $L_I$, denoted also $\C_I$, to be $L_I=\C_I = \mathbb{R}+\mathbb{R}I.$
It is isomorphic to the complex plane $\C$ so that it can be considered as a complex plane in $\Hq$ passing through $0$, $1$ and $I$. Their union is the space of quaternions, $\Hq=\underset{I\in{\mathbb{S}}}{\cup}L_I =\underset{I\in{\mathbb{S}}}{\cup}(\mathbb{R}+\mathbb{R}I)$ and their intersection is the real line
$\mathbb{R}=\underset{I\in{\mathbb{S}}}{\cap}L_I =\underset{I\in{\mathbb{S}}}{\cap}(\mathbb{R}+\mathbb{R}I).$

The basic notion in this section is the following

\begin{defn}
A real differentiable function $f: \Omega \longrightarrow \Hq$, with respect to $x_{\ell}$, $\ell = 0, 1, 2, 3$, on a given open domain $\Omega\subset \Hq$, is said to be a slice (left) regular function if, for very $I\in \Sq$, the restriction $f_I$ to $L_{I}=\R+I\R$, with variable $q=x+Iy$, is holomorphic on $\Omega_I := \Omega \cap L_I$, that is it has continuous partial derivatives with respect to $x$ and $y$ and the function
$\overline{\partial_I} f : \Omega_I \longrightarrow \Hq$ defined by
$$
\overline{\partial_I} f(x+Iy):=
\dfrac{1}{2}\left(\frac{\partial }{\partial x}+I\frac{\partial }{\partial y}\right)f_I(x+yI)
$$
vanishes identically on $\Omega_I$.
\end{defn}

Since no confusion can arise, we will refer to slice left-regular functions as slice regular functions or simply regular functions any short and denote their space by $\mathcal{SR}(\Omega)$. It turns out that $\mathcal{S}\mathcal{R}(\Omega)$ is a right vector space over the noncommutative field $\Hq$.
 According to the previous definition, the basic polynomials with quaternionic coefficients on the right are slice regular functions. Notice also that the power series $\sum_n q^na_n$; $a_n\in\Hq$, defines a slice regular function in its domain of convergence, which is proved to be
 an open ball $B(0,R):= \{q\in \Hq; \, |q| < R\}$. Here the space of slice regular functions is endowed with the natural uniform convergence on compact sets.

     Characterization of slice regular functions on a ball $B = B(0,R)$ centered at the origin is given in \cite{GentiliStruppa07}.
Namely, we have

\begin{lemma}[Series expansion]
A given $\Hq$-valued function $f$ is slice regular on $B(0,R)\subset \Hq$ if and only if it has a series expansion of the form:
$$f(q)=\sum_{n=0}^{+\infty} \frac{q^{n}}{n!}\frac{\partial^{n}f}{\partial x^{n}}(0),$$
converging on $B(0,R)=\{q\in\Hq;\mid q\mid< R\}$.
\end{lemma}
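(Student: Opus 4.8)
The plan is to prove the two implications separately, treating the backward direction as essentially a corollary of facts already recorded. If $f$ admits the stated expansion $f(q)=\sum_{n\ge 0} q^n c_n$ with $c_n=\frac{1}{n!}\frac{\partial^n f}{\partial x^n}(0)\in\Hq$, converging on $B(0,R)$, then $f$ is nothing but a quaternionic power series with coefficients on the right, and hence slice regular on its ball of convergence by the observation recorded after the definition of $\mathcal{SR}(\Omega)$. So the entire content lies in the forward implication: deducing the expansion from slice regularity.

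For the forward direction, first I would fix an arbitrary $I\in\Sq$ and work on the slice $L_I=\R+\R I\cong\C$. By hypothesis the restriction $f_I$ is holomorphic on the disc $\Omega_I=B(0,R)\cap L_I$ in the sense that $\overline{\partial_I} f_I=0$. To reduce to the classical one-variable theory, I would pick $J\in\Sq$ with $IJ=-JI$ and use the splitting $\Hq=\C_I\oplus\C_I J$ to write $f_I=F+GJ$ with $F,G\colon\Omega_I\to\C_I$. Since $\overline{\partial_I}$ acts component-wise on this splitting, the vanishing of $\overline{\partial_I} f_I$ forces both $F$ and $G$ to be holomorphic in the ordinary sense after the identification $\C_I\cong\C$, $x+Iy\mapsto x+iy$. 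The classical Taylor theorem on the disc then gives convergent expansions of $F$ and $G$, and recombining them yields
$$ f_I(q)=\sum_{n=0}^{\infty} q^n\, c_n^I, \qquad c_n^I=\frac{1}{n!}\frac{d^n f_I}{dz^n}(0), \quad q=x+Iy\in\Omega_I, $$
the series converging on all of $\Omega_I$ because its radius of convergence is at least the distance from $0$ to $\partial\Omega_I$, namely $R$.

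The hard part, and the place where slice regularity is genuinely used, is to show that the coefficients $c_n^I$ do not depend on the chosen slice. Here I would exploit the Cauchy--Riemann relation $\overline{\partial_I} f_I=0$, which forces the holomorphic derivative of $f_I$ along $L_I$ to coincide with $\partial f_I/\partial x$; iterating, $\frac{d^n f_I}{dz^n}(0)=\frac{\partial^n f}{\partial x^n}(0)$. Since $0$ lies on the real axis $\R=\cap_{I\in\Sq} L_I$ and the $x$-direction is the common real direction shared by every slice, this last quantity is manifestly independent of $I$, whence $c_n^I=\frac{1}{n!}\frac{\partial^n f}{\partial x^n}(0)=:c_n$ for all $I\in\Sq$. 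Finally, since $\Hq=\cup_{I\in\Sq} L_I$, every $q\in\Hq$ belongs to some slice $L_I$, so the slice-wise expansions patch together into the single global series $f(q)=\sum_{n\ge0} q^n c_n$ valid on $B(0,R)$, which is exactly the claimed representation.
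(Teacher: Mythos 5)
Your proposal addresses a statement that the paper itself never proves: this lemma is quoted as a known preliminary from Gentili--Struppa (reference [8], \emph{Adv.\ Math.} 2007), so there is no in-paper proof to compare against. Judged on its own merits, your argument is correct and is essentially a self-contained reconstruction of the standard one. The backward implication is indeed just the recorded observation that a right power series $\sum_n q^n a_n$ is slice regular on its ball of convergence. For the forward implication, your three steps are sound: (i) split $f_I = F + GJ$ on a slice (this is the paper's Splitting Lemma, so you may simply invoke it rather than re-derive that $\overline{\partial_I}$ acts componentwise); (ii) expand $F$ and $G$ by the classical Taylor theorem on the disc $\Omega_I$, so that $f_I(q)=\sum_n q^n c_n^I$ on $\Omega_I$; (iii) identify $c_n^I$ with $\frac{1}{n!}\frac{\partial^n f}{\partial x^n}(0)$, which depends only on $f|_{\R}$ and hence not on $I$, so the slice-wise expansions glue over $\Hq=\cup_{I\in\Sq} L_I$. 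Step (iii) is the genuinely quaternionic point and you isolate it correctly: pure $x$-derivatives of $f_I$ at $0$ use only values of $f_I$ on the real axis, where every restriction coincides with $f|_{\R}$. One small step should be made explicit to close the induction in (iii): the identity $\frac{d^n f_I}{dz^n}(0)=\frac{\partial^n f_I}{\partial x^n}(0)$ needs not only $\partial_z f_I = \partial_x f_I$ (from $\overline{\partial_I} f_I = 0$) but also that $\partial_x f_I$ is again holomorphic on the slice, which follows since $\partial_x$ commutes with $\overline{\partial_I}$, i.e.\ $\overline{\partial_I}\,\partial_x f_I = \partial_x\,\overline{\partial_I} f_I = 0$. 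With that one-line remark added, the proof is complete.
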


\begin{defn}
A domain $\Omega\subset \Hq$ is said to be a slice domain (or just $s$-domain) if  $\Omega\cap{\mathbb{R}}$ is nonempty and for all $I\in{\mathbb{S}}$, the set $\Omega_I:=\Omega\cap{L_I}$ is a domain of the complex plane $L_I:=\mathbb{R}+\mathbb{R}I$.
If moreover, for every $q=x+yI\in{\Omega}$, the whole sphere $x+y\mathbb{S}:=\lbrace{x+yJ; \, J\in{\mathbb{S}}}\rbrace$
is contained in $\Omega$, we say that  $\Omega$ is an axially symmetric slice domain.
\end{defn}

For example the whole space $\Hq$  and the Euclidean ball $B=B(0,R)$ of radius $R$ centered at the origin are axially symmetric slice domains.
The following results are of particular interest (see \cite{ColomboSabadiniStruppa2011, GentiliStoppatoStruppa2013}).

\begin{lemma}[Splitting lemma]\label{split} Let $f:B\longrightarrow{\Hq}$ be a slice regular function.  For every $I$ and $J$ two perpendicular imaginary units, there exist two holomorphic functions $F,G:B_{I}\longrightarrow{L_I}$ such that for all $z=x+yI\in{B_I}$, we have
$$f_I(z)=F(z)+G(z)J,$$
 where $B_I=B\cap{L_I}$ and $L_I=\mathbb{R}+\mathbb{R}I.$
\end{lemma}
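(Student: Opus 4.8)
The plan is to exploit the algebraic fact that two perpendicular imaginary units generate the whole quaternion algebra. First I would fix $I,J\in\mathbb{S}$ with $I\perp J$, so that $I^2=J^2=-1$ and $IJ=-JI$. Then $\{1,I,J,IJ\}$ is an $\R$-basis of $\Hq$, and consequently $\Hq$ splits as a direct sum of real vector spaces
$$\Hq = L_I \oplus L_I J,$$
since every $q=a+bI+cJ+dIJ$ can be written uniquely as $q=(a+bI)+(c+dI)J$ with $a+bI,\, c+dI\in L_I$. This structural decomposition is the backbone of the argument. Using it, I would write the restriction $f_I:B_I\longrightarrow\Hq$ uniquely as $f_I(z)=F(z)+G(z)J$ with $F,G:B_I\longrightarrow L_I$; since $f$ is real differentiable with respect to $x_0,x_1,x_2,x_3$, the components $F$ and $G$ are real-linear combinations of the (real-differentiable) coordinate functions of $f_I$ and hence are themselves real differentiable on $B_I$.

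Next I would apply the Cauchy-Riemann-type operator $\overline{\partial_I}=\tfrac12\left(\tfrac{\partial}{\partial x}+I\tfrac{\partial}{\partial y}\right)$ to $f_I$. Because $J$ is constant, associativity gives the factoring identity $\overline{\partial_I}(GJ)=(\overline{\partial_I}G)\,J$, so that
$$0=\overline{\partial_I}f_I=\overline{\partial_I}F+(\overline{\partial_I}G)\,J.$$
Here I would observe that $\overline{\partial_I}F\in L_I$ and $(\overline{\partial_I}G)\,J\in L_I J$: indeed $L_I$ is closed under multiplication (it is a subfield isomorphic to $\C$), so $I\,\partial_y F\in L_I$ and likewise $\overline{\partial_I}G\in L_I$. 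By the uniqueness of the decomposition $\Hq=L_I\oplus L_I J$, both summands must vanish, and since right multiplication by $J$ is invertible (with inverse $-J$), this forces $\overline{\partial_I}F=0$ and $\overline{\partial_I}G=0$.

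Finally, identifying $L_I\cong\C$ via $I\leftrightarrow i$, the equations $\overline{\partial_I}F=\overline{\partial_I}G=0$ are precisely the classical Cauchy-Riemann equations, so $F$ and $G$ are holomorphic on $B_I$ in the usual sense, completing the proof. The main obstacle is the careful bookkeeping of non-commutativity: one must verify both that the left multiplication by $I$ inside $\overline{\partial_I}$ keeps the $L_I$-valued part of the computation inside $L_I$ (which uses that $L_I$ is a commutative subfield) and that $J$ remains on the right throughout, so that the splitting into the $L_I$ and $L_I J$ components is genuinely preserved after differentiation.
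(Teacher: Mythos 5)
Your proof is correct. Note, however, that the paper does not prove this lemma at all: it is stated in the preliminaries as a known result, with references to Colombo--Sabadini--Struppa and Gentili--Stoppato--Struppa, so there is no in-paper argument to compare against. Your argument --- decomposing $\Hq = L_I \oplus L_I J$ using that $\{1,I,J,IJ\}$ is a real basis, projecting $f_I$ onto the two summands, pushing $\overline{\partial_I}$ through the constant right factor $J$, and invoking uniqueness of the decomposition (together with invertibility of right multiplication by $J$) to force $\overline{\partial_I}F=\overline{\partial_I}G=0$ --- is precisely the standard proof found in those references, and every step checks out: $L_I$ is indeed closed under left multiplication by $I$, and the two summands $\overline{\partial_I}F\in L_I$ and $(\overline{\partial_I}G)J\in L_I J$ must therefore vanish separately. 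One point worth stating explicitly at the end: to pass from the vanishing of $\overline{\partial_I}$ (i.e.\ the Cauchy--Riemann equations under $L_I\cong\C$) to holomorphy of $F$ and $G$, you need either continuity of the partial derivatives --- which is available, since the paper's definition of slice regularity requires $f_I$ to have continuous partials and $F$, $G$ are $\R$-linear images of $f_I$ --- or pointwise complex differentiability plus Goursat's theorem; your phrase ``real differentiable'' suffices for the latter route, but it is cleaner to note that the regularity is inherited directly from the definition.
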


\begin{lemma}[Representation formula]\label{repform}
Let $\Omega$ be an axially symmetric slice domain and $f\in{\mathcal{SR}(\Omega)}$. Then, for any $I,J\in{\mathbb{S}}$, we have the formula
$$
f(x+yJ)= \frac{1}{2}(1-JI)f_J(x+yI)+ \frac{1}{2}(1+JI)f_J(x-yI)
$$
for all $q=x+yJ\in{\Omega}$.
\end{lemma}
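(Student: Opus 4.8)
The plan is to verify the identity by exhibiting its right-hand side as a slice regular function that coincides with $f$ on the slice $L_I$, and then to invoke the identity principle for slice regular functions on a slice domain. Throughout I read the right-hand side as $f$ evaluated at the two points $x\pm yI\in\Omega_I$ (the restriction to $L_I$), since these are the data from which we reconstruct $f$ on the companion slice $L_J$. As a first sanity check I would set $J=I$: then $\tfrac12(1-I^2)=1$ and $\tfrac12(1+I^2)=0$, so the formula collapses to $f(x+yI)=f(x+yI)$, while $J=-I$ returns $f(x-yI)$. This confirms that the weights $\tfrac12(1\mp JI)$ are the correct interpolation coefficients and that both sides already agree on all of $\Omega_I$.

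The substance is to show that
\[
g(x+yJ):=\tfrac12(1-JI)f(x+yI)+\tfrac12(1+JI)f(x-yI)
\]
defines a slice regular function on $\Omega$. Writing $u(x,y)=f(x+yI)$ and $v(x,y)=f(x-yI)$, the holomorphy of $f_I$ on $\Omega_I$ yields the Cauchy--Riemann relations $\partial_x u=-I\,\partial_y u$ and $\partial_x v=+I\,\partial_y v$, the sign flip for $v$ coming from the $y\mapsto -y$ reflection, which is legitimate because $\Omega$ is axially symmetric, so $\Omega_I$ is symmetric and $x-yI\in\Omega_I$. Restricting $g$ to an arbitrary slice $L_L$ and applying $\overline{\partial_L}=\tfrac12(\partial_x+L\,\partial_y)$, I would substitute these relations and collect the coefficients of $\partial_y u$ and $\partial_y v$. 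The computation hinges on the quaternionic identities $(1-LI)I=I+L$, $L(1-LI)=L+I$, $(1+LI)I=I-L$ and $L(1+LI)=L-I$, whence the coefficient of $\partial_y u$ is $-(I+L)+(L+I)=0$ and that of $\partial_y v$ is $(I-L)+(L-I)=0$. Thus $\overline{\partial_L}g\equiv 0$ on every slice, so $g$ is slice regular.

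Having established that $g$ is slice regular, that it agrees with $f$ on $\Omega_I$, and in particular on the nonempty real set $\Omega\cap\R$, the identity principle for slice regular functions on a slice domain forces $g\equiv f$ on $\Omega$, which is exactly the asserted formula. The main obstacle I anticipate is bookkeeping the noncommutativity in the second step: the weights multiply $u$ and $v$ on the left while the extra factor $L$ from $\overline{\partial_L}$ also acts on the left, so one must resist ``factoring'' as if $L$ and $I$ commuted, since it is precisely the order-sensitive identities above that produce the cancellation. As a self-contained alternative on a ball $B(0,R)$, one can instead invoke the Series Expansion Lemma: writing $f(q)=\sum_n q^n a_n$ and using that the real and imaginary parts of $(x+yJ)^n$ are the same real polynomials $A_n(x,y),B_n(x,y)$ for every imaginary unit (so that $(x\pm yI)^n=A_n\pm IB_n$ and $(x+yJ)^n=A_n+JB_n$), a termwise computation shows that the weighted combination reproduces $\sum_n(A_n+JB_n)a_n=f(x+yJ)$, giving the formula directly without appeal to the identity principle.
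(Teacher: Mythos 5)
Your proposal is correct, but there is nothing in the paper to compare it against: the Representation Formula is stated in Section~2 as a known preliminary, quoted from the cited monographs of Colombo--Sabadini--Struppa and Gentili--Stoppato--Struppa, and the paper gives no proof of it. Your argument is, in essence, the standard proof from that literature: one checks that the right-hand side, viewed as a function $g$ of $x+yL$ for each $L\in\mathbb{S}$, satisfies $\overline{\partial_L}\,g=0$, using the Cauchy--Riemann relations $\partial_x u=-I\,\partial_y u$ and $\partial_x v=I\,\partial_y v$ for $u(x,y)=f_I(x+yI)$, $v(x,y)=f_I(x-yI)$, together with the order-sensitive identities $(1-LI)I=I+L$, $L(1-LI)=L+I$, $(1+LI)I=I-L$, $L(1+LI)=L-I$; your cancellations of the coefficients of $\partial_y u$ and $\partial_y v$ are exact. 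You also correctly resolve the notational slip in the statement: $f_J$ evaluated at $x\pm yI$ only makes sense as $f$ restricted to the slice $L_I$. Two points you leave implicit, both routine: the right-hand side is well defined on $\Omega\setminus\mathbb{R}$ because it is invariant under the substitution $(y,L)\mapsto(-y,-L)$ (the two terms swap), and axial symmetry is needed not only for the reflection $x-yI\in\Omega_I$ but to guarantee $x\pm yI\in\Omega$ for \emph{every} $x+yL\in\Omega$, so that $g$ is defined on all of $\Omega$. Your appeal to the identity principle for slice regular functions is legitimate (it is standard in the same references, and agreement on $\Omega_I$, indeed on the open real set $\Omega\cap\mathbb{R}$, more than suffices). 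Finally, your power-series alternative, while valid only on balls, would in fact cover the paper's only application of the lemma, which is on $\Omega=\mathbb{H}$ where the series expansion is global.
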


\begin{lemma}[Extension Lemma]\label{extensionLem}
Let $\Omega_I=\Omega\cap \C_I$; $I\in \mathbb{S}$, be a symmetric domain in $\C_I$ with respect to the real axis and $h:\Omega_I\longrightarrow \Hq$ a holomorphic function. Then, the function $ext(h)$ defined by
$$ext(h)(x+yJ):= \dfrac{1}{2}[h(x+yI)+h(x-yI)]+\frac{JI}{2}[h(x-yI)-h(x+yI)];  \quad J\in \mathbb{S},$$
extends $h$ to a regular function on $\overset{\sim}\Omega=\underset{x+yJ\in{\Omega}}\cup x+y\mathbb{S}$, the symmetric completion of $\Omega_I$.
Moreover, $ext(h)$ is the unique slice regular extension of $h$.
\end{lemma}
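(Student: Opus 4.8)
The plan is to establish three facts in turn: that $ext(h)$ genuinely restricts to $h$ on the slice $\C_I$, that it is slice regular on the whole symmetric completion $\widetilde\Omega$, and finally that it is the only regular function with this property. For the extension property I would substitute $J=I$ into the defining formula; since $I^2=-1$ the factor $\frac{II}{2}=-\frac12$ combines with the first bracket and the terms involving $h(x-yI)$ cancel, leaving exactly $h(x+yI)$. Before this, I must check that $ext(h)$ is well defined on $\widetilde\Omega$: the symmetry of $\Omega_I$ with respect to the real axis guarantees that both $x+yI$ and $x-yI$ lie in $\Omega_I$, so the right-hand side makes sense, and one verifies that replacing the admissible representation $q=x+yJ$ by $q=x+(-y)(-J)$ leaves the value unchanged, while on $q=x\in\R$ the formula collapses to $h(x)$ independently of $J$.

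The core of the argument is the regularity of $ext(h)$. Here I would fix an arbitrary $J\in\mathbb{S}$, restrict $ext(h)$ to $\C_J$ and compute the Cauchy--Riemann quantity $\overline{\partial_J}\,ext(h)=\tfrac12(\partial_x+J\partial_y)ext(h)$. It is convenient to write $ext(h)(x+yJ)=P(x,y)+JI\,Q(x,y)$ with $P=\tfrac12[h(x+yI)+h(x-yI)]$ and $Q=\tfrac12[h(x-yI)-h(x+yI)]$, recording the chain-rule sign flips coming from the argument $x-yI$. Using the holomorphy of $h$ on $\C_I$ in the form $\partial_x h=-I\partial_y h$ (whence also $I\partial_y h=-\partial_x h$ and $I\partial_x h=\partial_y h$), the operator $\partial_x+J\partial_y$ applied to $P+JI\,Q$ splits, via $J\cdot JI=-I$, into a part $\partial_x P-I\partial_y Q$ carrying no left factor $J$ and a part $J(\partial_y P+I\partial_x Q)$; substituting the holomorphy relations shows that each of these two parts vanishes identically. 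I expect the genuine obstacle to be purely the noncommutative bookkeeping in this step: one must keep the multiplications by $I$ and $J$ on the left throughout and correctly propagate the sign flips produced by differentiating $h(x-yI)$, since a single misplaced factor destroys the cancellation.

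For uniqueness I would invoke the Representation Formula (Lemma~\ref{repform}) rather than an identity-principle argument. If $f\in\mathcal{SR}(\widetilde\Omega)$ is any regular extension of $h$, then $f_I=h$ on $\Omega_I$, and the representation formula expresses $f(x+yJ)$ on every slice as $\tfrac12(1-JI)f(x+yI)+\tfrac12(1+JI)f(x-yI)$. Replacing $f(x\pm yI)$ by $h(x\pm yI)$ and regrouping the coefficients yields precisely the defining expression for $ext(h)$, so $f=ext(h)$ on all of $\widetilde\Omega$. This simultaneously confirms that the defining formula is the only admissible one and closes the proof.
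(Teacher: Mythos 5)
Your proposal is essentially correct, but there is nothing in the paper to measure it against: Lemma \ref{extensionLem} is stated as a background result, attributed to the literature (\cite{ColomboSabadiniStruppa2011,GentiliStoppatoStruppa2013}), and the paper gives no proof of it. What you wrote is the standard argument from those references, and your computations check out. The restriction check ($J=I$ gives $\frac{JI}{2}=-\frac12$ and the $h(x-yI)$ terms cancel), the well-definedness under $(x,y,J)\mapsto(x,-y,-J)$, and the Cauchy--Riemann step are all sound: writing $ext(h)=P+JI\,Q$ one indeed gets $(\partial_x+J\partial_y)(P+JI\,Q)=\left[\partial_x P-I\partial_y Q\right]+J\left[\partial_y P+I\partial_x Q\right]$ since left multiplication by the constant $JI$ commutes with the partials and $J\cdot JI=-I$; with the left Cauchy--Riemann equation $\partial_x h+I\partial_y h=0$ (which is the correct reading of ``holomorphic'' for the $\Hq$-valued $h$, consistent with the paper's definition of slice regularity) and the sign flip from differentiating $h(x-yI)$ in $y$, both brackets vanish at the pair of points $x\pm yI$. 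Your uniqueness step via the Representation Formula is also the natural one given the paper's toolkit. The only thing you should make explicit is that Lemma \ref{repform} is stated for \emph{axially symmetric slice domains}, so you must observe that $\widetilde\Omega$ is one: it is axially symmetric by construction; each slice $\widetilde\Omega\cap\C_J$ is the isometric copy $\{x+yJ:\,x+yI\in\Omega_I\}$ of $\Omega_I$, hence a planar domain; and $\widetilde\Omega\cap\R\neq\emptyset$ because a connected planar domain symmetric about the real axis must meet that axis (any path joining a point to its conjugate crosses it). This is a bookkeeping point, not a gap in substance.
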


\section{The slice hyperholomorphic Bargmann-Fock space $\mathcal{F}^{2,\nu}_{slice}(\Hq)$}
The authors of \cite{AlpayColomboSabadini2014} have defined the slice hyperholomorphic quaternionic Bargmann-Fock space $\mathcal{F}^{2,\nu}_{I}(\Hq)$, for given $I\in{\mathbb{S}}$ and real $\nu>0$, to be
$$\mathcal{F}^{2,\nu}_{I}(\Hq):=\lbrace{f\in{\mathcal{SR}(\Hq); \,  \int_{\C_I}\vert{f_I(q)}\vert^2 e^{-\nu\vert{q}\vert^2}d\lambda_I(q) <\infty}}\rbrace,$$
 where $f_I = f|_{\C_I}$ and $d\lambda_I(q)=dxdy$ for $q=x+yI$.
The right $\Hq$-vector space $\mathcal{F}^{2,\nu}_{I}(\Hq)$ is endowed with the inner product
\begin{equation}\label{spfg}
\scal{f,g}_{\mathcal{F}^{2,\nu}_{I}(\Hq)} = \int_{\C_I}\overline{g_I(q)}f_I(q)e^{-\nu\vert{q}\vert^2} d\lambda_I(q)
\end{equation}
 for $f,g\in{\mathcal{F}^{2,\nu}_{I}(\Hq)}$, so that the associated norm is given by
      $$\Vert{f}\Vert_{\mathcal{F}^{2,\nu}_{I}(\Hq)}^2= \int_{\C_I}\vert{f_I(q)}\vert^2 e^{-\nu\vert{q}\vert^2}d\lambda_I(q).$$
 It is shown in \cite{AlpayColomboSabadini2014} that the monomials $e_n(q):=q^n$; $n=0,1,2, \cdots,$ form an orthogonal basis of $\mathcal{F}^{2,\nu}_{I}(\Hq)$
 with
 \begin{equation}\label{spmonomials}
 \scal{e_m,e_n}_{\mathcal{F}^{2,\nu}_{I}(\Hq)}  = \frac{\pi m!}{\nu^{m+1}}\delta_{m,n}.
\end{equation}
 Moreover, for $f= \sum\limits_{n=0}^{\infty}e_na_n$ and $g= \sum\limits_{n=0}^{\infty}e_nb_n$ two functions in $\mathcal{F}^{2,\nu}_{I}(\Hq)$,  we have
\begin{equation}\label{spfgexp}
\scal{f,g}_{\mathcal{F}^{2,\nu}_{I}(\Hq)}=\left(\frac{\pi}{\nu}\right) \sum_{n=0}^{\infty}\frac{n!}{\nu^{n}}\overline{b_n}a_n,
\end{equation}
  so that a given series $f(q)= \sum\limits_{n=0}^{\infty}q^na_n$ belongs to $\mathcal{F}^{2,\nu}_{I}(\Hq)$ if and only if the quaternionic sequence $(a_n)_n$ satisfies the growth condition
  \begin{align} \label{GC}
  \Vert{f}\Vert_{\mathcal{F}^{2,\nu}_{I}(\Hq)}^2= \left(\frac{\pi}{\nu}\right) \sum_{n=0}^{\infty}\frac{n!}{\nu^{n}}\vert{a_n}\vert^2<\infty.
  \end{align}
This is used by the authors of \cite{AlpayColomboSabadini2014} to prove that the definition of the slice hyperholomorphic Bargmann-Fock space $\mathcal{F}^{2,\nu}_{I}(\Hq)$ is in fact independent of the choice of the imaginary unit $I\in{\mathbb{S}}$.
The following theorem shows a new direct proof of this fact. More precisely, we have the following

 \begin{theorem}
   For any slice regular function $f$ on $\Hq$ and $I,J\in{\mathbb{S}}$, we have
  $$ \frac{1}{2}\Vert{f}\Vert_{\mathcal{F}^{2,\nu}_{I}(\Hq)}\leq{\Vert{f}\Vert_{\mathcal{F}^{2,\nu}_{J}(\Hq)}}\leq 2\Vert{f}\Vert_{\mathcal{F}^{2,\nu}_{I}(\Hq)}.
  $$
  Accordingly, the definition of the slice hyperholomorphic quaternionic Bargmann-Fock space $\mathcal{F}^{2,\nu}_{I}(\Hq)$
  does not depend on the choice of the slice $\C_I$.
  \end{theorem}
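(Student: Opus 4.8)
The engine of the proof will be the Representation Formula (Lemma \ref{repform}), which reconstructs the values of $f$ on the slice $\C_J$ from its values on the slice $\C_I$. Writing $q=x+yJ\in\C_J$, so that $|q|^2=x^2+y^2$ and $d\lambda_J(q)=dx\,dy$, the plan is to convert this algebraic identity into a pointwise majorization of $|f(x+yJ)|$ by the values $|f(x\pm yI)|$, and then integrate against the common Gaussian weight $e^{-\nu(x^2+y^2)}$.

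First I would control the quaternionic coefficients appearing in Lemma \ref{repform}. Since $I,J\in\Sq$ are imaginary units, $\overline{JI}=\overline{I}\,\overline{J}=IJ$ and $(JI)(IJ)=J(I I)J=-J^2=1$, so a direct expansion gives $|1\pm JI|^2=2\pm(IJ+JI)$. As $IJ+JI=-2\scal{I,J}\in[-2,2]$ for imaginary units with $|\scal{I,J}|\le 1$, both coefficients satisfy $|1\pm JI|\le 2$, i.e. $\tfrac12|1\pm JI|\le 1$. Feeding this into the representation formula and applying the triangle inequality yields the clean pointwise bound $|f(x+yJ)|\le |f(x+yI)|+|f(x-yI)|$.

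Next I would square this estimate, invoking $(a+b)^2\le 2(a^2+b^2)$, and integrate in $(x,y)$ over $\R^2$ against $e^{-\nu(x^2+y^2)}$. The first resulting term is exactly $\norm{f}_{\mathcal{F}^{2,\nu}_{I}(\Hq)}^2$. The second term, built from $|f(x-yI)|^2$, becomes the same quantity after the change of variables $y\mapsto -y$, which leaves both the Lebesgue measure $dx\,dy$ and the weight $e^{-\nu(x^2+y^2)}$ invariant. This produces $\norm{f}_{\mathcal{F}^{2,\nu}_{J}(\Hq)}^2\le 4\,\norm{f}_{\mathcal{F}^{2,\nu}_{I}(\Hq)}^2$, i.e. the right-hand inequality $\norm{f}_{\mathcal{F}^{2,\nu}_{J}(\Hq)}\le 2\norm{f}_{\mathcal{F}^{2,\nu}_{I}(\Hq)}$. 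Interchanging the roles of $I$ and $J$, which is legitimate since the hypotheses are symmetric in the two units, gives the left-hand inequality, and the asserted slice-independence of $\mathcal{F}^{2,\nu}_{I}(\Hq)$ follows at once: finiteness of the norm on one slice forces it on every other.

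The computations here are all elementary, so the only real point requiring care is the bookkeeping of constants: obtaining precisely the factor $2$ claimed in the theorem relies on combining the two estimates $\tfrac12|1\pm JI|\le 1$ with the inequality $(a+b)^2\le 2(a^2+b^2)$, any cruder handling of either step inflating the constant. One should also confirm that the integrability needed to split the integral and to justify the change of variables is guaranteed as soon as one side is finite, which is automatic from the pointwise bound above.
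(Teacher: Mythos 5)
Your proposal is correct and follows essentially the same route as the paper's own proof: the representation formula, the coefficient bound $\left|1\pm JI\right|\le 2$, the pointwise estimate $|f(x+yJ)|\le |f(x+yI)|+|f(x-yI)|$, squaring via $(a+b)^2\le 2(a^2+b^2)$, integration against the Gaussian weight, and symmetry in $I$ and $J$. The only cosmetic difference is that you bound $|1\pm JI|$ by an exact computation of $|1\pm JI|^2$ while the paper simply uses the triangle inequality $|1\pm JI|\le 1+|JI|=2$, and you phrase the identity $\Vert f\Vert_{\mathcal{F}^{2,\nu}_{I}(\Hq)}=\Vert f\Vert_{\mathcal{F}^{2,\nu}_{-I}(\Hq)}$ as the change of variables $y\mapsto -y$, which is the same fact.
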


  \begin{proof}
  Starting from the representation formula (Lemma \ref{repform}), we get
   $$f_J(x+yJ)= \frac{1}{2}(1-JI)f_I(x+yI)+ \frac{1}{2}(1+JI)f_I(x-yI)$$
  for given $I,J\in{\mathbb{S}}$, and therefore
     $$\left|f_J(x+yJ)\right|\leq  \frac{1}{2}\left|1-JI\right| \left|f_I(x+yI)\right|+ \frac{1}{2}\left|1+JI\right| \left|f_I(x-yI)\right|.$$
Since $\left|1 \pm IJ\right|\leq 1+\left|JI\right|\leq 2$,  we obtain
$$\left|f_J(x+yJ)\right| \leq  \left|f_I(x+yI)\right|+ \left|f_I(x-yI)\right|,$$
and therefore
\begin{align*}
\vert{f_J(x+yJ)}\vert^2 &\leq  \left(\vert{f_I(x+yI)}\vert + \vert{f_I(x-yI)}\vert\right)^2
\\& \leq 2\left(\vert{f_I(x+yI)}\vert^2+\vert{f_I(x-yI)}\vert^2\right)
\end{align*}
because $\left(\vert{f_I(x+yI)}\vert - \vert{f_I(x-yI)}\vert\right)^2 \geq 0$.
This implies that
\begin{align*}
\Vert{f}\Vert^2_{\mathcal{F}^{2,\nu}_{J}(\Hq)}
&=\int_{\C_J} \vert{f_J(x+yJ)}\vert^2e^{-\nu(x^2+y^2)} dxdy
 \\& \leq 2 \int_{\C_I} \vert{f_I(x+yI)}\vert^2e^{-\nu(x^2+y^2)} dxdy
  \\ & \qquad + 2 \int_{\C_I} \vert{f_I(x-yI)}\vert^2e^{-\nu(x^2+y^2)} dxdy
 \\& \leq 2\left(\Vert{f}\Vert^2_{\mathcal{F}^{2,\nu}_{I}(\Hq)}+\Vert{f}\Vert^2_{\mathcal{F}_{\nu,-I}^2(\Hq)}\right).
\end{align*}
  But since $\Vert{f}\Vert_{\mathcal{F}^{2,\nu}_{I}(\Hq)}=\Vert{f}\Vert_{\mathcal{F}_{\nu,-I}^2(\Hq)}$, we conclude that $\Vert{f}\Vert^2_{\mathcal{F}^{2,\nu}_{J}(\Hq)}\leq 4\Vert{f}\Vert^2_{\mathcal{F}^{2,\nu}_{I}(\Hq)}$.
  Now, by interchanging the roles of $I$ and $J$ we get also $\Vert{f}\Vert^2_{\mathcal{F}^{2,\nu}_{I}(\Hq)}\leq 4\Vert{f}\Vert^2_{\mathcal{F}^{2,\nu}_{J}(\Hq)}$. Thus, it follows that  $$\frac{1}{2}\Vert{f}\Vert_{\mathcal{F}^{2,\nu}_{I}(\Hq)}\leq{\Vert{f}\Vert_{\mathcal{F}^{2,\nu}_{J}(\Hq)}}\leq 2\Vert{f}\Vert_{\mathcal{F}^{2,\nu}_{I}(\Hq)}.$$
From the last inequality, we see easily that if $f\in{\mathcal{F}^{2,\nu}_{I}(\Hq)}$ then $f\in{\mathcal{F}^{2,\nu}_{J}(\Hq)}$ and vice versa. Therefore, $\mathcal{F}^{2,\nu}_{I}(\Hq)=\mathcal{F}^{2,\nu}_{J}(\Hq)$ and the space $\mathcal{F}^{2,\nu}_{I}(\Hq)$ is independent of the choice of the slice $\C_I$. This completes the proof.
  \end{proof}

\begin{remark}
According to the previous result, we will denote $\mathcal{F}^{2,\nu}_{I}(\Hq)$ simply by $\mathcal{F}^{2,\nu}_{slice}(\Hq)$.
\end{remark}

The next obvious result seems to be known \cite{AlpayColomboSabadini2014}. However, we did not locate any proof of it in the literature. For the sake of completeness we present here a direct one.

  \begin{proposition}
  The slice hyperholomorphic Bargmann-Fock space $\mathcal{F}^{2,\nu}_{slice}(\Hq)$ is a (right) quaternionic Hilbert space.
  \end{proposition}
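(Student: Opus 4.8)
The plan is to realize $\mathcal{F}^{2,\nu}_{slice}(\Hq)$ as isometrically isomorphic to a weighted $\ell^2$-space of quaternionic sequences, whose completeness is elementary, and then to transport completeness back. Equation \eqref{spfg} already endows the space with a right $\Hq$-valued form that is Hermitian symmetric, positive definite, and satisfies the expected right-linearity $\scal{fa,g}=\scal{f,g}a$ together with $\scal{f,ga}=\overline{a}\scal{f,g}$ for $a\in\Hq$; these follow immediately by pulling the constant quaternion out of the defining integral and using $\overline{pq}=\overline{q}\,\overline{p}$. Hence the only substantive point to establish is completeness in the induced norm.

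First I would introduce the weighted sequence space
$$\ell^{2}_{\nu}(\Hq):=\Big\{(a_n)_{n\ge 0}\subset\Hq\ :\ \sum_{n=0}^{\infty}\tfrac{n!}{\nu^{n}}|a_n|^2<\infty\Big\},$$
with the inner product $\big(\tfrac{\pi}{\nu}\big)\sum_n \tfrac{n!}{\nu^n}\overline{b_n}a_n$ dictated by \eqref{spfgexp}. Since $\Hq$ is isometric to $\R^4$ as a normed space and has multiplicative modulus, the classical argument that a weighted $\ell^2$-space is complete transfers verbatim: a Cauchy sequence converges coordinatewise, and a truncation/Fatou argument shows the coordinatewise limit lies in $\ell^2_\nu(\Hq)$ and is the norm limit.

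Next I would check that the coefficient map $T\colon f\mapsto(a_n)_n$, where $f(q)=\sum_n q^n a_n$ is the globally convergent expansion provided by the series-expansion lemma, is a bijective isometry from $\mathcal{F}^{2,\nu}_{slice}(\Hq)$ onto $\ell^2_\nu(\Hq)$. That $T$ is isometric, hence injective, is precisely the growth condition \eqref{GC} combined with \eqref{spfgexp}. For surjectivity I would begin with an arbitrary sequence in $\ell^2_\nu(\Hq)$: summability forces $|a_n|\le C\,\nu^{n/2}/\sqrt{n!}$, whence $\limsup_n|a_n|^{1/n}=0$ because $(n!)^{1/(2n)}\to\infty$; thus $\sum_n q^n a_n$ has infinite radius of convergence and defines an entire slice regular function on $\Hq$, which belongs to $\mathcal{F}^{2,\nu}_{slice}(\Hq)$ exactly by \eqref{GC}.

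Finally, as a bijective isometry transfers completeness, the completeness of $\ell^2_\nu(\Hq)$ forces that of $\mathcal{F}^{2,\nu}_{slice}(\Hq)$, which is thus a right quaternionic Hilbert space. I expect the main obstacle to be the surjectivity step: confirming that the coefficient growth condition alone produces a genuine entire regular function rather than a merely formal series. The radius-of-convergence estimate resolves this, but one must also be attentive that the noncommutativity of $\Hq$ does not disturb the $\ell^2$-completeness argument—it does not, since only the real-valued modulus enters the relevant estimates.
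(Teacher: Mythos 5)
Your proof is correct, but it follows a genuinely different route from the paper. The paper proves completeness slice-wise: given a Cauchy sequence $(f_n)_n$, it restricts to a slice $\C_I$, writes ${f_n}_I = F_n + G_n J$ via the Splitting Lemma (Lemma \ref{split}), uses the identity $\vert {f_n}_I\vert^2 = \vert F_n\vert^2 + \vert G_n\vert^2$ to see that $(F_n)_n$ and $(G_n)_n$ are Cauchy in the \emph{classical} complex Bargmann--Fock space $\mathcal{F}^{2,\nu}(\C_I)$, invokes completeness of that classical space to obtain limits $F,G$, and then rebuilds a slice regular limit $f = ext(F) + ext(G)J$ via the Extension Lemma (Lemma \ref{extensionLem}). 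You instead transport the problem to the weighted sequence space $\ell^2_\nu(\Hq)$ through the Taylor-coefficient map, using the series expansion lemma together with the Parseval-type identities \eqref{spfgexp}--\eqref{GC}, and your surjectivity step (square-summability of $\bigl(\tfrac{n!}{\nu^n}\bigr)^{1/2}\vert a_n\vert$ forces $\limsup_n \vert a_n\vert^{1/n}=0$, hence infinite radius of convergence) is handled correctly. The trade-off is in the dependencies: the paper's argument needs the slice-theoretic toolkit (splitting and extension) plus the classical one-variable completeness result, but does not need the orthogonality/Parseval facts; your argument is arguably more self-contained relative to Section 3 of the paper, but it leans essentially on \eqref{GC} as an if-and-only-if statement, which the paper records by citing \cite{AlpayColomboSabadini2014} rather than proving --- if you wanted full independence from that reference you would have to justify the term-by-term integration behind \eqref{spmonomials} and \eqref{GC} (polar coordinates plus monotone convergence). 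Your approach also yields a stronger structural conclusion as a by-product, namely the explicit isometric isomorphism $\mathcal{F}^{2,\nu}_{slice}(\Hq)\cong \ell^2_\nu(\Hq)$, which the paper's proof does not exhibit.
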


  \begin{proof}
 What is needed to prove this theorem is completeness.  For this, let $(f_n)_n$ be a Cauchy sequence in $\mathcal{F}^{2,\nu}_{slice}(\Hq)$ and choose $J\in \Sq$ to be orthogonal to $I$. By the splitting lemma (Lemma \ref{split}) for slice regular functions, we can write every $f_n$ on the slice $\C_I$ as
$${f_n}_{I}=F_n + G_n J.$$
It is clear that $(F_n)_n$ and $(G_n)_n$ are Cauchy sequences of $\C_I$-valued holomorphic functions in the standard Bargmann-Fock Hilbert space $\mathcal{F}^{2,\nu}(\C_I)$ on $\C_I$, to wit
$$\mathcal{F}^{2,\nu}(\C_I):=\mathcal{H}ol(\C_I)\cap L^{2,\nu}(\C_I; e^{-\nu|z|^2} d\lambda_I(z)).$$
 This follows thanks to the well-established fact
 $$\vert{ {f_n}_{I} }\vert^2=\vert{F_n}\vert^2+\vert{G_n}\vert^2.$$
 Therefore, $(F_n)_n$ (resp. $(G_n)_n$) converges in $\mathcal{F}^{2,\nu}(\C_I)$ to some unique $F$ (resp. $G$) belonging to $\mathcal{F}^{2,\nu}(\C_I)$.
Hence, by the extension lemma (Lemma \ref{extensionLem}), the holomorphic function $F$ (resp. $G$) on $\C_I$ can be
extended to a slice regular function on the whole $\Hq$, say $ext(F)$ (resp. $ext(G)$).
Thus, $f:=ext(F) + ext(G)J$ is slice regular.
 Moreover, we have $f_I = F + GJ$ and therefore
 $$
 \Vert{f_n-f}\Vert^2_{\mathcal{F}^{2,\nu}_{slice}(\Hq)}
 \leq \Vert{{F_n} - F}\Vert^2_{\mathcal{F}^{2,\nu}(\C_I)} + \Vert{{G_n}   - G}\Vert^2_{\mathcal{F}^{2,\nu}(\C_I)}
 $$
 tends to $0$ as $n$ goes to $+\infty$. This shows that $(f_n)_n$ converges in $\mathcal{F}^{2,\nu}_{slice}(\Hq)$ to $f$ that also belongs to $\mathcal{F}^{2,\nu}_{slice}(\Hq)$. This ends the proof.
  \end{proof}

Notice also that for fixed $q\in{\Hq}$, the evaluation map $\delta_q : \mathcal{F}^{2,\nu}_{slice}(\Hq) \longrightarrow  \Hq $; $\delta_q(f):= f(q)$, is a continuous linear form. More precisely, we have

\begin{lemma}
For every $f\in \mathcal{F}^{2,\nu}_{slice}(\Hq)$, we have the estimation
\begin{align*}
\vert{\delta_q(f)}\vert \leq \left(\frac{\nu}{\pi}\right)^{1/2}\exp{\left(\frac{\nu}{2}\vert{q}\vert^2\right)} \Vert{f}\Vert_{\mathcal{F}^{2,\nu}_{slice}(\Hq)}.
\end{align*}
\end{lemma}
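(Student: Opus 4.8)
The plan is to exploit the series realization of the norm recorded in the growth condition \eqref{GC} together with the multiplicativity of the quaternionic modulus. Every $f\in\mathcal{F}^{2,\nu}_{slice}(\Hq)$ expands as $f(q)=\sum_{n=0}^{\infty}q^{n}a_{n}$ with $a_{n}\in\Hq$ (the monomials $e_n(q)=q^n$ form an orthogonal basis), and by \eqref{GC} its norm is
$$
\Vert{f}\Vert^{2}_{\mathcal{F}^{2,\nu}_{slice}(\Hq)}=\frac{\pi}{\nu}\sum_{n=0}^{\infty}\frac{n!}{\nu^{n}}\vert{a_{n}}\vert^{2}.
$$
First I would estimate $\vert{\delta_q(f)}\vert=\vert{f(q)}\vert$ by the triangle inequality, using $\vert{q^{n}}\vert=\vert{q}\vert^{n}$ (valid since $\vert\cdot\vert$ is multiplicative on $\Hq$), to obtain $\vert{f(q)}\vert\leq\sum_{n=0}^{\infty}\vert{q}\vert^{n}\vert{a_{n}}\vert$.

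Next I would write each summand as the product $\left(\sqrt{n!/\nu^{n}}\,\vert{a_{n}}\vert\right)\left(\sqrt{\nu^{n}/n!}\,\vert{q}\vert^{n}\right)$ and apply the Cauchy--Schwarz inequality for $\ell^{2}$-sequences. This factors the bound into
$$
\vert{f(q)}\vert\leq\left(\sum_{n=0}^{\infty}\frac{n!}{\nu^{n}}\vert{a_{n}}\vert^{2}\right)^{1/2}\left(\sum_{n=0}^{\infty}\frac{\nu^{n}}{n!}\vert{q}\vert^{2n}\right)^{1/2},
$$
where the first factor equals $\left(\nu/\pi\right)^{1/2}\Vert{f}\Vert_{\mathcal{F}^{2,\nu}_{slice}(\Hq)}$ by the norm identity above. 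Finally I would recognize the second factor as the exponential series $\sum_{n=0}^{\infty}(\nu\vert{q}\vert^{2})^{n}/n!=e^{\nu\vert{q}\vert^{2}}$, whose square root is $\exp\!\left(\tfrac{\nu}{2}\vert{q}\vert^{2}\right)$; multiplying the two factors yields exactly the asserted estimate.

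I expect no serious obstacle here, as the argument is the quaternionic transcription of the classical Bargmann reproducing-kernel bound. The one point deserving a word of care is that the Cauchy--Schwarz step is applied to the \emph{real} sequences of moduli rather than to the quaternionic coefficients, so the noncommutativity of $\Hq$ never intervenes; and the absolute convergence of every series appearing above is guaranteed precisely by $f$ belonging to $\mathcal{F}^{2,\nu}_{slice}(\Hq)$, so all manipulations are legitimate. As a byproduct, this estimate identifies $\exp\!\left(\tfrac{\nu}{2}\vert{q}\vert^{2}\right)$ as the correct growth rate and confirms that $\delta_q$ is a bounded linear form, consistent with $\mathcal{F}^{2,\nu}_{slice}(\Hq)$ being a reproducing kernel Hilbert space.
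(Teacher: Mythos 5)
Your proposal is correct and follows essentially the same route as the paper's own proof: series expansion of $f$, the triangle inequality on moduli, Cauchy--Schwarz applied to the real sequences $\sqrt{n!/\nu^{n}}\,\vert a_{n}\vert$ and $\sqrt{\nu^{n}/n!}\,\vert q\vert^{n}$, and identification of the exponential series together with the norm identity \eqref{GC}. The only cosmetic difference is where the constant $\pi/\nu$ is placed inside the Cauchy--Schwarz factors, which does not affect the argument.
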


\begin{proof}
This follows easily by expanding $f$ as series, $f(q)= \sum\limits_{n=0}^{\infty}q^na_n$, and next making use of the Cauchy-Schwartz's inequality,
taking into account \eqref{GC}. Indeed,
\begin{align*}
\vert{f(q)}\vert & \leq{ \sum_{n=0}^{\infty}\vert{q}\vert^n\vert{a_n}\vert}
                   \leq \sum_{n=0}^{\infty} \left(\frac{\sqrt{\nu^{n+1}}}{\sqrt{\pi n!}}\vert{q}\vert^n \right) \left(\frac{\sqrt{\pi n!}}{\sqrt{\nu^{n+1}}}\vert{a_n}\vert\right)
\\& \leq  \left( \sum_{n=0}^{\infty}\frac{\nu (\nu \vert{q}\vert^2)^n}{\pi n!}\right)^{\frac{1}{2}}\left( \sum_{n=0}^{\infty}\frac{\pi n!}{\nu^{n+1}}\vert{a_n}\vert^2\right)^\frac{1}{2}
\\&  \leq \left(\frac{\nu}{\pi}\right)^{1/2}\exp{\left(\frac{\nu}{2}\vert{q}\vert^2\right)} \Vert{f}\Vert_{\mathcal{F}^{2,\nu}_{slice}(\Hq)}.
\end{align*}
\end{proof}

 Thus, by Riesz' representation theorem for quaternionic Hilbert spaces (\cite[Theorem 1]{TobarMandic2014}), there exists a unique element $K_q^{\nu}$ in $\mathcal{F}^{2,\nu}_{slice}(\Hq)$ such that:
 $$\scal{f,K_q^{\nu}}_{\mathcal{F}^{2,\nu}_{slice}(\Hq)}=\delta_q(f)=f(q)$$ for all $f\in \mathcal{F}^{2,\nu}_{slice}(\Hq).$
The reproducing kernel function $K_{\nu} : \Hq\times{\Hq}  \longrightarrow   \Hq$; $(p,q)  \longmapsto  K_{\nu}(p,q)=K_q^{\nu}(p)$ is then given by
\begin{equation}\label{expRK}
K_{\nu}(p,q)  = \left(\frac{\nu}{\pi}\right) \sum_{n=0}^\infty\frac{\nu^np^n\overline{q}^n}{n!}  = \overline{K_{\nu}(q,p)} .
\end{equation}
Accordingly, we have the reproducing property
$$f(q)= \int_{\C_I}\overline{K_{\nu}(p,q)}f_I(p)e^{-\nu\vert{p}\vert^2}d\lambda_I(p) = \scal{ f, K_{\nu}(\cdot,q)}_{\mathcal{F}^{2,\nu}_{slice}(\Hq)}.$$
for every given $I\in{\mathbb{S}}$ and $f\in {\mathcal{F}^{2,\nu}_{slice}(\Hq)}$. Moreover, we prove the following

\begin{proposition} For every $q,q'\in{\Hq}$, we have
$$\scal{K_q^{\nu},K_{q'}^{\nu}}_{\mathcal{F}^{2,\nu}_{slice}(\Hq)} = {K_{\nu}(q',q)}$$
and in particular
$$ \norm{K_q^{\nu}}_{\mathcal{F}^{2,\nu}_{slice}(\Hq)}^2= \left( \frac{\nu}{\pi}\right)e^{\nu |q|^2}.$$
\end{proposition}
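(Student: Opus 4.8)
The plan is to obtain both identities as immediate consequences of the reproducing property, combined with the closed form \eqref{expRK} of the kernel. The pivotal observation is purely definitional: the Riesz representative $K_q^{\nu}$ is by construction the section $K_{\nu}(\cdot,q)$, so the reproducing identity $\scal{f,K_{\nu}(\cdot,q)}_{\mathcal{F}^{2,\nu}_{slice}(\Hq)}=f(q)$ may be applied with the kernel itself in the first slot. No completeness or density argument is needed beyond what has already been established.

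For the first identity I would take $f=K_q^{\nu}$ in the reproducing property and evaluate at the point $q'$. Since $K_{q'}^{\nu}=K_{\nu}(\cdot,q')$, this gives
$$\scal{K_q^{\nu},K_{q'}^{\nu}}_{\mathcal{F}^{2,\nu}_{slice}(\Hq)}=\scal{K_q^{\nu},K_{\nu}(\cdot,q')}_{\mathcal{F}^{2,\nu}_{slice}(\Hq)}=K_q^{\nu}(q')=K_{\nu}(q',q),$$
which is exactly the asserted formula; nothing beyond unwinding definitions is required. One should only verify that the placement of the conjugate in the inner product \eqref{spfg} (conjugate-linear in the second argument) is consistent with the location of $\overline{q}$ in \eqref{expRK}, but this is routine bookkeeping.

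For the norm I would specialize to $q'=q$, so that $\norm{K_q^{\nu}}_{\mathcal{F}^{2,\nu}_{slice}(\Hq)}^2=\scal{K_q^{\nu},K_q^{\nu}}_{\mathcal{F}^{2,\nu}_{slice}(\Hq)}=K_{\nu}(q,q)$, and then evaluate the series \eqref{expRK} on the diagonal. The single genuinely noncommutative point — and the step I expect to need the most care — is the simplification of $q^{n}\overline{q}^{n}$. Here I would use that $q$ and $\overline{q}$ commute, since $q\overline{q}=\overline{q}q=|q|^2$ is real; hence $q^{n}\overline{q}^{n}=(q\overline{q})^{n}=|q|^{2n}$, and the real scalar $\nu/\pi$ passes freely through the quaternionic terms. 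The series then collapses to
$$K_{\nu}(q,q)=\left(\frac{\nu}{\pi}\right)\sum_{n=0}^{\infty}\frac{\nu^{n}|q|^{2n}}{n!}=\left(\frac{\nu}{\pi}\right)e^{\nu|q|^2},$$
yielding the stated value of $\norm{K_q^{\nu}}_{\mathcal{F}^{2,\nu}_{slice}(\Hq)}^2$. Apart from checking that this power series converges for all $q\in\Hq$ (which it does, being the exponential series in $|q|^2$), no further obstacle arises.
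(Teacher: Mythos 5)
Your proposal is correct and follows essentially the same route as the paper: apply the reproducing property $f(q')=\scal{f,K_{\nu}(\cdot,q')}_{\mathcal{F}^{2,\nu}_{slice}(\Hq)}$ to $f=K_q^{\nu}$, then specialize to $q'=q$ and evaluate the kernel series on the diagonal. Your extra verification that $q^n\overline{q}^n=(q\overline{q})^n=|q|^{2n}$ (since $q$ and $\overline{q}$ commute) is a detail the paper leaves implicit, but it is exactly the right justification.
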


\begin{proof} The above assertion follows easily making use of the reproducing property
$f(q') = \scal{ f, K_{\nu}(\cdot,q')}_{\mathcal{F}^{2,\nu}_{slice}(\Hq)}$ applied to
$f(p):= K_{\nu}(p,q)= K_q^{\nu}(p)$ belonging to $\mathcal{F}^{2,\nu}_{slice}(\Hq)$.
As immediate consequence, we obtain
$$
\norm{K_q^{\nu}}_{\mathcal{F}^{2,\nu}_{slice}(\Hq)}^2 =  \scal{K_q^{\nu},K_{q}^{\nu}} = K_{\nu}(q,q) =  \left( \frac{\nu}{\pi}\right)e^{\nu |q|^2}.
$$
\end{proof}

\begin{remark} The previous fact can also be handled using  \eqref{spfgexp} with the expansions
$$ f(p)= K_q^{\nu}(p)= \sum_{n=0}^{\infty}p^na_n \quad \mbox{and} \quad  g(q) = K_{q'}^{\nu}(p)= \sum_{n=0}^{\infty}p^nb_n$$
for every $p\in{\Hq}$ and fixed $q,q'\in \Hq$. Here
$$a_n=  \left(\frac{\nu}{\pi}\right) \frac{\nu^n\overline{q}^n}{n!} \quad \mbox{and} \quad b_n= \left(\frac{\nu}{\pi}\right) \frac{\nu^n\overline{q'}^n}{n!}$$
according to the expansion \eqref{expRK}.
\end{remark}
\section{A quaternionic analogue of the Segal-Bargmann transform}

In this section we propose a quaternionic analogue of the Segal-Bargmann transform. We show that it realizes, as in the complex setting, an isometric isomorphism from the Hilbert space $L^2(\mathbb{R};dx)=L^2(\mathbb{R};\Hq)$, consisting of all the square integrable $\Hq$-valued functions with respect to
\begin{align}\label{spR1}
\scal{\varphi,\psi}_{L^2(\mathbb{R};dx)} : =  \int_{\R} \overline{\psi(x)} \varphi(x) dx,
\end{align}
 onto the slice hyperholomorphic Bargmann-Fock space $\mathcal{F}^{2,\nu}_{slice}(\Hq)$. To this end, we consider the kernel function
\begin{equation}\label{KerneFct}
A(q;x) :=   \left(\frac{\nu}{\pi}\right)^{3/4} e^{\frac{-\nu}{2}(q^2+x^2)+\nu \sqrt{2}qx}; \quad (q,x)\in{\Hq\times{\mathbb{R}}},
\end{equation}
obtained as the slice hyperholomorphic extension of the kernel function of the classical Segal-Bargmann transform.
This is closely connected with the fact that $A(q;x)$ can be seen as the generating function of the real weighted Hermite functions
$$ h_n^\nu(x) := (-1)^n e^{\frac{\nu}{2}x^2} \frac{d^n}{dx^n}\left(e^{-\nu x^2}\right) $$
that form an orthogonal basis of $L^2(\mathbb{R};dx)$, with norm given explicitly by
\begin{equation}\label{normhn}
\norm{h_n^\nu}_{L^2(\mathbb{R};dx)}^2=  2^n\nu^n n!\left(\frac{\pi}{\nu}\right)^{1/2} .
\end{equation}
Namely, we assert

  \begin{proposition} \label{GenFct}
Set $e_n(q):=q^n$. Then for every $q\in{\Hq}$ and $x\in \R$, we have
$$  A(q;x) = \sum_{n=0}^\infty\frac{h_n^\nu(x)}{\norm{h_n^\nu}_{L^2(\mathbb{R};dx)}}  \frac{e_n(q)}{\norm{e_n}_{\mathcal{F}^{2,\nu}_{slice}(\Hq)}}.$$
 \end{proposition}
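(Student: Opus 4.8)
The plan is to reduce the stated formula to the classical generating relation for the weighted Hermite functions and then transport that relation from the real line to the quaternionic variable $q$. Starting from the Rodrigues-type definition $h_n^\nu(x)=(-1)^ne^{\frac{\nu}{2}x^2}\frac{d^n}{dx^n}(e^{-\nu x^2})$, I would Taylor-expand the auxiliary function $s\mapsto e^{-\nu(x-s)^2}$ about $s=0$. Since $\frac{\partial^n}{\partial s^n}e^{-\nu(x-s)^2}\big|_{s=0}=(-1)^n\frac{d^n}{dx^n}e^{-\nu x^2}$, multiplying by $e^{\frac{\nu}{2}x^2}$ gives, for real $s,x$,
$$\sum_{n=0}^\infty\frac{h_n^\nu(x)}{n!}s^n=e^{-\frac{\nu}{2}x^2+2\nu xs-\nu s^2}.$$

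Second, I would compute the normalizing constants on the right-hand side. From \eqref{spmonomials} one has $\norm{e_n}_{\mathcal{F}^{2,\nu}_{slice}(\Hq)}^2=\pi n!/\nu^{n+1}$, and from \eqref{normhn} one has $\norm{h_n^\nu}_{L^2(\R;dx)}^2=2^n\nu^n n!(\pi/\nu)^{1/2}$. Multiplying and taking square roots yields $\norm{h_n^\nu}_{L^2(\R;dx)}\norm{e_n}_{\mathcal{F}^{2,\nu}_{slice}(\Hq)}=2^{n/2}(\pi/\nu)^{3/4}n!$, so the coefficient of $e_n(q)=q^n$ in the series equals $(\nu/\pi)^{3/4}h_n^\nu(x)/(2^{n/2}n!)$. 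Hence the right-hand side of the proposition is $(\nu/\pi)^{3/4}\sum_{n=0}^\infty\frac{h_n^\nu(x)}{n!}(q/\sqrt2)^n$.

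Third, I would substitute $s=q/\sqrt2$ into the generating relation. For fixed $x\in\R$ the coefficients $h_n^\nu(x)/n!$ are real, and the elements $q,q^2,qx$ all lie in the commutative real subalgebra $\R[q]$, which is isomorphic to $\C$; therefore the exponential identities used in the complex case apply verbatim. The exponent becomes $-\frac{\nu}{2}x^2+2\nu x(q/\sqrt2)-\nu(q/\sqrt2)^2=-\frac{\nu}{2}(q^2+x^2)+\nu\sqrt2\,qx$, which is precisely the exponent in the definition \eqref{KerneFct} of $A(q;x)$. This produces the claimed equality $A(q;x)=\sum_n\frac{h_n^\nu(x)}{\norm{h_n^\nu}_{L^2(\R;dx)}}\frac{e_n(q)}{\norm{e_n}_{\mathcal{F}^{2,\nu}_{slice}(\Hq)}}$.

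The step demanding the most care is this last substitution of a quaternion into a function originally defined for a real argument. One must verify that the power series $\sum_n\frac{h_n^\nu(x)}{n!}(q/\sqrt2)^n$ converges for every $q\in\Hq$—it does, since $|q^n|=|q|^n$ and the generating series of the first step has infinite radius of convergence—and that the substitution is legitimate. The cleanest justification is via uniqueness: both sides are slice regular in $q$, the right-hand side being an everywhere-convergent power series and $A(\cdot;x)$ being by construction the slice hyperholomorphic extension of the classical kernel, and they agree for all real $q=x_0$ by the identity of the first step. Since a slice regular function on $\Hq$ is determined by its restriction to $\R$ (uniqueness of the slice regular extension, Lemma \ref{extensionLem}), equality on $\R$ forces equality on all of $\Hq$.
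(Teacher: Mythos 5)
Your proof is correct and follows essentially the same route as the paper: insert the explicit norms \eqref{normhn} and \eqref{spmonomials} to reduce the right-hand side to $\left(\frac{\nu}{\pi}\right)^{3/4}\sum_{n=0}^\infty \frac{h_n^\nu(x)}{n!}\left(q/\sqrt{2}\right)^n$, then sum this by the Hermite generating function to obtain $A(q;x)$. The paper merely cites the classical generating function and asserts the identity for quaternionic $q$, so your extra steps --- deriving that generating function from the Rodrigues formula and justifying the substitution of $q\in\Hq$ for the real parameter (via the commutative slice $\C_{I_q}$, or via the identity principle and uniqueness of the slice regular extension) --- supply details the paper leaves implicit rather than a different argument.
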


  \begin{proof}
  By means of the explicit expressions of the norms of $h_n^\nu$ (see \eqref{normhn}) and $e_n$ (see \eqref{spmonomials}) combined with the standard generating function for the Hermite polynomials (see for example \cite{Lebedev1972}), we can check that
 \begin{align*} \sum_{n=0}^\infty\frac{h_n^\nu(x)}{\norm{h_n^\nu}_{L^2(\mathbb{R};dx)}}  \frac{q^n}{\norm{e_n}_{\mathcal{F}^{2,\nu}_{slice}(\Hq)}}
 = \left(\frac{\nu}{\pi}\right)^{3/4} \sum_{n=0}^\infty\frac{h_n^\nu(x) q^n}{n!2^{\frac{n}{2}}}
 A(q;x)
 \end{align*}
 for every given $(q;x)\in{\Hq\times{\mathbb{R}}}$.
   \end{proof}

Another property concerns the partial function of the above kernel function and defined on $\R$ by $A_q:x\mapsto{A_q(x):=A(q;x)}$ for every fixed $q\in \Hq$. It connects the norm of $A_q$ in $L^2(\mathbb{R};dx)$ to the one of the reproducing kernel function $K^{\nu}_{q}$ in $\mathcal{F}^{2,\nu}_{slice}(\Hq)$.
In fact, we have

  \begin{proposition} \label{AqKq}
For every fixed $q\in{\Hq}$, the function $A_q$ is an element of $L^2(\mathbb{R};dx)$ and satisfies
\begin{equation}\label{normAqKq}
\norm{A_q}_{L^2(\mathbb{R};dx)}= \left(\frac{\nu}{\pi}\right)^{1/2} e^{\frac{\nu}{2}\left|{q}\right|^{2}}=\norm{K^{\nu}_{q}}_{\mathcal{F}^{2,\nu}_{slice}(\Hq)}.
 \end{equation}
 \end{proposition}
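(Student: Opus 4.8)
The plan is to exploit the series representation of $A(q;x)$ furnished by Proposition~\ref{GenFct} together with the orthogonality of the weighted Hermite functions $h_n^\nu$ in $L^2(\mathbb{R};dx)$, so that $\norm{A_q}_{L^2(\mathbb{R};dx)}$ is computed by a Parseval-type identity. For fixed $q$ I would write $A_q = \sum_{n=0}^\infty h_n^\nu c_n$ with the \emph{quaternionic} coefficients $c_n = c_n(q) = q^n/\big(\norm{h_n^\nu}_{L^2(\mathbb{R};dx)}\norm{e_n}_{\mathcal{F}^{2,\nu}_{slice}(\Hq)}\big)$ placed on the right, exactly as in Proposition~\ref{GenFct}. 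Since each $h_n^\nu$ is real-valued, conjugation gives $\overline{A_q(x)} = \sum_m \overline{c_m}\, h_m^\nu(x)$, and therefore
$$\norm{A_q}^2_{L^2(\mathbb{R};dx)} = \int_{\R} \overline{A_q(x)}\, A_q(x)\, dx = \sum_{m,n} \overline{c_m}\left(\int_{\R} h_m^\nu(x) h_n^\nu(x)\, dx\right) c_n.$$

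Next, using the orthogonality $\int_{\R} h_m^\nu h_n^\nu\, dx = \norm{h_n^\nu}^2\delta_{m,n}$, only the diagonal survives, and since $\norm{h_n^\nu}^2$ is a real scalar it factors out between $\overline{c_n}$ and $c_n$, collapsing the double sum to
$$\norm{A_q}^2 = \sum_{n=0}^\infty \norm{h_n^\nu}^2\, |c_n|^2 = \sum_{n=0}^\infty \frac{|q|^{2n}}{\norm{e_n}^2_{\mathcal{F}^{2,\nu}_{slice}(\Hq)}}.$$
Substituting $\norm{e_n}^2 = \pi n!/\nu^{n+1}$ from \eqref{spmonomials} then yields $\norm{A_q}^2 = \frac{\nu}{\pi}\sum_n (\nu|q|^2)^n/n! = \frac{\nu}{\pi} e^{\nu|q|^2}$, whence $\norm{A_q} = (\nu/\pi)^{1/2} e^{\frac{\nu}{2}|q|^2}$; comparing with the value $\norm{K_q^\nu}^2 = (\nu/\pi) e^{\nu|q|^2}$ established above completes the identity \eqref{normAqKq}. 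In particular the finiteness of this sum shows a posteriori that $A_q \in L^2(\mathbb{R};dx)$.

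The main obstacle is the rigorous justification of the termwise integration, i.e.\ the validity of the Parseval identity in the quaternionic space $L^2(\mathbb{R};dx)$: one must check that the partial sums converge to $A_q$ in $L^2$ and that the non-commutativity of $\Hq$ does not interfere. Here it does not, precisely because the $h_n^\nu$ are real scalars, so the coefficients $c_n$ pull cleanly out of the integral on the right; convergence is secured by the finiteness of the resulting series, verified by the closed-form evaluation.

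Alternatively, and perhaps more transparently, I could compute $\norm{A_q}^2 = \int_{\R} A(q;x)\overline{A(q;x)}\, dx$ directly. Since the exponent $w = -\frac{\nu}{2}(q^2+x^2)+\nu\sqrt{2}\,qx$ lies in the commutative slice $\C_{I_q} = \R[q]$, one has $A(q;x)\overline{A(q;x)} = (\nu/\pi)^{3/2} e^{w+\overline{w}} = (\nu/\pi)^{3/2} e^{2\,\mathrm{Re}(w)}$ with $\mathrm{Re}(w) = -\frac{\nu}{2}(\mathrm{Re}(q^2)+x^2)+\nu\sqrt{2}\,\mathrm{Re}(q)\,x$. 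Writing $q = a+Ib$ with $a = \mathrm{Re}(q)$ and $\mathrm{Re}(q^2) = a^2 - b^2$, the problem reduces to the elementary Gaussian integral $\int_{\R} e^{-\nu x^2 + 2\nu\sqrt{2}\,a x}\, dx = (\pi/\nu)^{1/2} e^{2\nu a^2}$, which after simplification with $|q|^2 = a^2+b^2$ gives the same value $\frac{\nu}{\pi} e^{\nu|q|^2}$.
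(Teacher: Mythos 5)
Your primary argument is correct, but it takes a genuinely different route from the paper's. The paper proves Proposition \ref{AqKq} by direct computation: writing $q=u+vI_q$, it notes that the exponent of $A_q(x)$ lies in the commutative slice $\C_{I_q}$, computes $\left|A_q(x)\right|^2$ explicitly, and evaluates the resulting Gaussian integral via \eqref{Gaussianintegral} --- this is exactly your ``alternative'' computation at the end (and, for what it is worth, your version keeps the correct cross term $2\nu\sqrt{2}\,ax$, i.e.\ $b=2\nu\sqrt{2}\,a$ in \eqref{Gaussianintegral}, whereas the paper's intermediate display drops the factor $2$, though its final answer is right). Your main proof instead expands $A_q$ in the orthogonal basis $\lbrace h_n^\nu\rbrace$ via Proposition \ref{GenFct} and invokes a Parseval identity, correctly using that the $h_n^\nu$ are real-valued so that the quaternionic coefficients factor out despite non-commutativity. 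What this buys is a structural explanation of \eqref{normAqKq}: both $\norm{A_q}^2_{L^2(\mathbb{R};dx)}$ and $\norm{K_q^\nu}^2_{\mathcal{F}^{2,\nu}_{slice}(\Hq)}$ collapse to the same series $\sum_n |q|^{2n}/\norm{e_n}^2_{\mathcal{F}^{2,\nu}_{slice}(\Hq)} = (\nu/\pi)e^{\nu|q|^2}$, because $A$ is precisely the kernel obtained by pairing orthonormal bases of the two spaces; this also prefigures the unitarity of $\mathcal{B}_\nu$ proved later. The cost is the convergence point you flag but do not fully close: Proposition \ref{GenFct} gives only pointwise convergence, so to legitimize Parseval you should note that the partial sums form an orthogonal series with square-summable coefficients (by the closed-form evaluation), hence converge in $L^2(\mathbb{R};dx)$ to some $g$, and then identify $g=A_q$ almost everywhere by extracting a subsequence converging pointwise a.e.\ and comparing with the pointwise limit from Proposition \ref{GenFct}. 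With that one standard step added (or by simply falling back on your direct Gaussian computation, which involves no interchange of limit and integral), the proof is complete.
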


 \begin{proof}
 By writing the quaternion $q$ as $q=u+vI_q\in{\C_{I_q}}$, with $u,v\in{\mathbb{R}}$ and $I_q\in \mathbb{S}$, we get $q^2=u^2-v^2+2uvI_q$.
 Thus, 
 we can write the modulus of the kernel function $A_q(x)$ as
$$
\left|{A_q(x)}\right|^{2}= \left(\frac{\nu}{\pi}\right)^{\frac{3}{2}} e^{-\nu(u^2-v^2+x^2)+\nu \sqrt{2}ux}.
$$
Therefore, it follows that
 $$
 \int_{\mathbb{R}}\left|{A_q(x)}\right|^{2}dx=\left(\frac{\nu}{\pi}\right)^{\frac{3}{2}} e^{-\nu(u^2-v^2)}\int_{\mathbb{R}}e^{-\nu x^2+\nu \sqrt{2}ux}dx.
 $$
 In the last equality, we recognize the Gaussian integral given by
 \begin{equation}\label{Gaussianintegral}
 \int_{\mathbb{R}}e^{-ax^2+bx}dx=\left(\frac{\pi}{a}\right)^{\frac{1}{2}}e^{\frac{b^2}{4a}}
 \end{equation}
 with $a=\nu$ and $b=\nu\sqrt{2}u$. This implies
 $$
 \norm{A_q}_{L^2(\mathbb{R};dx)}=\left(\frac{\nu}{\pi}\right)^{1/2} e^{\frac{\nu}{2}\left|{q}\right|^{2}}=\norm{K^{\nu}_{q}}_{\mathcal{F}^{2,\nu}_{slice}(\Hq)}.
 $$
 \end{proof}

 Associated to the kernel function $A(q;x)$ given through \eqref{KerneFct}, we consider the integral transform defined by
 \begin{align} \label{defQSBT}
 \mathcal{B}_\nu (\psi)(q)= \int_{\mathbb{R}}A(q;x)\psi(x)dx
 = \left(\frac{\nu}{\pi}\right)^{\frac{3}{4}} \int_{\mathbb{R}}e^{\frac{-\nu}{2}(q^2+x^2)+\nu \sqrt{2}qx}\psi(x)dx
\end{align}
 for $q\in{\Hq}$ and $\psi: \mathbb{R}\longrightarrow{\Hq}$, provided that the integral exists.
 We will call it the quaternionic Segal-Bargmann transform. According to Proposition \ref{GenFct}, we get
 the analogue of Bargmann's result for the classical Segal-Bargmann transform \cite{Bargmann1961}. Namely, we have
   \begin{align*}
   \mathcal{B}_\nu (\psi)(q) &= \sum_{n=0}^\infty e_n(q) \frac{\scal{\psi,h_n^\nu}_{L^2(\mathbb{R};dx)}} {\norm{h_n^\nu}_{L^2(\mathbb{R};dx)} \norm{e_n}_{\mathcal{F}^{2,\nu}_{slice}(\Hq)}}
   \\&= \left(\frac{\nu}{\pi}\right)^{3/4} \sum_{n=0}^\infty\frac{q^n } { 2^{n/2} n! } \scal{\psi,h_n^\nu}_{L^2(\mathbb{R};dx)} .
    \end{align*}
 The following shows that $\mathcal{B}_\nu$ is well defined on $L^2(\mathbb{R};dx)$.

  \begin{proposition} \label{19}
For every $q\in{\Hq}$ and every $\psi \in L^2(\mathbb{R};dx)$, we have
\begin{equation}\label{WellDefined}
 \left|{\mathcal{B}_\nu (\psi)(q)}\right|\leq \left(\frac{\nu}{\pi}\right)^{1/2}e^{\frac{\nu}{2}\vert{q}\vert^2}\Vert{\psi}\Vert_{L^2(\mathbb{R};dx)}.
 \end{equation}
 \end{proposition}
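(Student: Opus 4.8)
The plan is to read the bound off directly from the integral representation \eqref{defQSBT} of $\mathcal{B}_\nu$ together with the Cauchy--Schwarz inequality, the only genuine point requiring care being the passage from the quaternion-valued integrand to a scalar one. First I would start from
$$\mathcal{B}_\nu(\psi)(q) = \int_{\mathbb{R}} A(q;x)\psi(x)\, dx$$
and estimate its modulus. Since the $\Hq$-valued integral is defined componentwise and $|\cdot|$ is the Euclidean norm on $\R^4\cong\Hq$, the integral triangle inequality $\left|\int_{\mathbb{R}} F(x)\,dx\right| \leq \int_{\mathbb{R}} |F(x)|\,dx$ remains valid for quaternion-valued $F$. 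Applying this to $F(x)=A(q;x)\psi(x)$ and invoking the multiplicativity of the quaternionic modulus, $|A(q;x)\psi(x)| = |A(q;x)|\,|\psi(x)|$, reduces the problem to the scalar integral $\int_{\mathbb{R}} |A(q;x)|\,|\psi(x)|\,dx$, whose two factors are now nonnegative real-valued functions.

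Next I would apply the ordinary Cauchy--Schwarz inequality to these factors, obtaining
$$\int_{\mathbb{R}} |A(q;x)|\,|\psi(x)|\,dx \leq \norm{A_q}_{L^2(\mathbb{R};dx)}\,\norm{\psi}_{L^2(\mathbb{R};dx)}.$$
Here $\norm{A_q}_{L^2(\mathbb{R};dx)}$ has already been computed in Proposition \ref{AqKq}, namely $\norm{A_q}_{L^2(\mathbb{R};dx)} = \left(\frac{\nu}{\pi}\right)^{1/2} e^{\frac{\nu}{2}|q|^2}$; substituting this value yields exactly the claimed estimate \eqref{WellDefined}. In particular, the finiteness of the right-hand side shows that the defining integral converges absolutely for every $\psi\in L^2(\mathbb{R};dx)$, so that $\mathcal{B}_\nu(\psi)(q)$ is indeed well defined as asserted.

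The only delicate step is the first one: the noncommutativity of $\Hq$ forbids the naive manipulations one would perform over $\C$. I expect this to be the main (and in fact only) obstacle, but it dissolves once one observes that we estimate moduli exclusively, and the modulus is multiplicative and obeys the integral triangle inequality, so the noncommutativity never interferes and the scalar Cauchy--Schwarz suffices. Beyond making this reduction explicit and citing Proposition \ref{AqKq} for the value of $\norm{A_q}_{L^2(\mathbb{R};dx)}$, no further difficulty is anticipated.
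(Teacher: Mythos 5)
Your proposal is correct and follows essentially the same route as the paper's own proof: bound $\left|\mathcal{B}_\nu(\psi)(q)\right|$ by $\int_{\mathbb{R}}\left|A(q;x)\right|\left|\psi(x)\right|dx$, apply Cauchy--Schwarz, and substitute the value of $\norm{A_q}_{L^2(\mathbb{R};dx)}$ from Proposition \ref{AqKq}. The only difference is that you spell out the quaternionic justifications (multiplicativity of the modulus and the integral triangle inequality for $\Hq$-valued integrands) that the paper leaves implicit, which is a welcome clarification rather than a change of method.
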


 \begin{proof}
In view of \eqref{normAqKq}, the inequality \eqref{WellDefined} reads simply
$$\left|{\mathcal{B}_\nu (\psi)(q)}\right|\leq{\norm{A_q}_{L^2(\mathbb{R};dx)}}\norm{\psi}_{L^2(\mathbb{R};dx)}$$
which follows immediately by means of
$$
\left|{\mathcal{B}_\nu (\psi)(q)}\right| \leq  \int_{\mathbb{R}} \left|{A(q;x)}\right| \left|{\psi(x)}\right| dx
$$
combined with the Cauchy Schwarz inequality.
\end{proof}

The following gives the explicit expression of the Segal-Bargmann transform acting on the Hermite functions $h_n^\nu$.
Namely, we have

  \begin{lemma} \label{action}
 For every quaternion $q\in{\Hq}$ and nonnegative integer $n$, we have
 $$ \mathcal{B}_\nu (h_n^\nu)(q)=   \left(\frac{\nu}{\pi}\right)^{\frac{1}{4}}2^{\frac{n}{2}}\nu^nq^n$$
 and
 $$ \norm{\mathcal{B}_\nu (h_{n}^\nu)}_{\mathcal{F}^{2,\nu}_{slice}(\Hq)} =\norm{h_n^\nu}_{L^2(\mathbb{R};dx)}.$$
 \end{lemma}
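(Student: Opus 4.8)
The plan is to derive both identities from the Bargmann-type series expansion of $\mathcal{B}_\nu$ established just above, namely
$$\mathcal{B}_\nu (\psi)(q) = \sum_{m=0}^\infty e_m(q) \frac{\scal{\psi,h_m^\nu}_{L^2(\mathbb{R};dx)}} {\norm{h_m^\nu}_{L^2(\mathbb{R};dx)} \norm{e_m}_{\mathcal{F}^{2,\nu}_{slice}(\Hq)}},$$
rather than attacking the defining Gaussian integral head-on. First I would specialize this formula to $\psi = h_n^\nu$. Since the weighted Hermite functions are real-valued and mutually orthogonal in $L^2(\mathbb{R};dx)$, the inner product $\scal{h_n^\nu,h_m^\nu}_{L^2(\mathbb{R};dx)}$ equals $\norm{h_n^\nu}^2_{L^2(\mathbb{R};dx)}\,\delta_{n,m}$, so the whole series collapses to its single surviving term and yields
$$\mathcal{B}_\nu (h_n^\nu)(q) = \frac{\norm{h_n^\nu}_{L^2(\mathbb{R};dx)}}{\norm{e_n}_{\mathcal{F}^{2,\nu}_{slice}(\Hq)}}\, q^n.$$

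The remaining task for the first identity is purely computational: insert the explicit norms $\norm{e_n}^2_{\mathcal{F}^{2,\nu}_{slice}(\Hq)} = \pi n!/\nu^{n+1}$ from \eqref{spmonomials} and $\norm{h_n^\nu}^2_{L^2(\mathbb{R};dx)} = 2^n\nu^n n! (\pi/\nu)^{1/2}$ from \eqref{normhn}, and simplify the resulting quotient. Collecting the powers of $2$, $\nu$ and $\pi$ and cancelling the common factor $\sqrt{n!}$ produces exactly the coefficient $(\nu/\pi)^{1/4} 2^{n/2}\nu^n$, which is the first claimed closed form.

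For the norm identity I would exploit that $\mathcal{B}_\nu(h_n^\nu)$ is merely a positive real scalar multiple of the monomial $e_n(q)=q^n$. Hence its Bargmann-Fock norm factors cleanly as
$$\norm{\mathcal{B}_\nu(h_n^\nu)}_{\mathcal{F}^{2,\nu}_{slice}(\Hq)} = \frac{\norm{h_n^\nu}_{L^2(\mathbb{R};dx)}}{\norm{e_n}_{\mathcal{F}^{2,\nu}_{slice}(\Hq)}}\, \norm{e_n}_{\mathcal{F}^{2,\nu}_{slice}(\Hq)} = \norm{h_n^\nu}_{L^2(\mathbb{R};dx)},$$
the factor $\norm{e_n}_{\mathcal{F}^{2,\nu}_{slice}(\Hq)}$ cancelling at once. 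This makes the second identity an immediate consequence of the first and needs no further work.

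I do not expect a genuine obstacle: the already-established series representation does all the conceptual work, orthogonality collapses the sum to one term, and everything else is bookkeeping. The only points demanding care are keeping the half-integer exponents of $\nu$ and $\pi$ straight when simplifying the quotient of the two norms, and recording that the proportionality constant is real and positive, so it factors out of the quaternionic norm without any modulus subtlety. As a consistency check one may alternatively verify the norm identity by squaring the explicit form $(\nu/\pi)^{1/4}2^{n/2}\nu^n q^n$ and applying \eqref{spmonomials} once more, recovering $2^n\nu^n n!(\pi/\nu)^{1/2}=\norm{h_n^\nu}^2_{L^2(\mathbb{R};dx)}$.
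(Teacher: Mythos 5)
Your proof is correct, but it takes a genuinely different route from the paper's. The paper proves the first identity by working directly on the defining integral: an integration by parts yields the recursion $\mathcal{B}_\nu(h_n^\nu)(q) = \nu\sqrt{2}\,q\,\mathcal{B}_\nu(h_{n-1}^\nu)(q)$, hence $\mathcal{B}_\nu(h_n^\nu)(q) = (\nu\sqrt{2}\,q)^n\,\mathcal{B}_\nu(h_0^\nu)(q)$ by induction, and the base case is evaluated with the Gaussian integral \eqref{Gaussianintegral}; the norm identity is then obtained exactly as in your final consistency check, by squaring the closed form and invoking \eqref{spmonomials} and \eqref{normhn}. You instead specialize the Bargmann-type series expansion of $\mathcal{B}_\nu(\psi)$ (the unnumbered display following \eqref{defQSBT}, which rests on Proposition \ref{GenFct}) to $\psi = h_n^\nu$ and let orthogonality collapse the sum; your coefficient computation $\norm{h_n^\nu}_{L^2(\mathbb{R};dx)}/\norm{e_n}_{\mathcal{F}^{2,\nu}_{slice}(\Hq)} = (\nu/\pi)^{1/4}2^{n/2}\nu^n$ is accurate, and there is no circularity since Proposition \ref{GenFct} is proved independently via the classical generating function of the Hermite polynomials. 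What your route buys is brevity and a conceptual explanation of why the monomials appear; what it costs — and presumably why the paper avoids it — is that the series expansion of $\mathcal{B}_\nu(\psi)$ hides an interchange of summation and integration which the paper asserts with only a one-line justification. To make your argument self-contained you should record that, for fixed $q$, the partial sums of the kernel expansion converge to $A_q$ in $L^2(\R;dx)$ (the coefficients $e_n(q)/\norm{e_n}_{\mathcal{F}^{2,\nu}_{slice}(\Hq)}$ are square-summable because $\sum_n \nu^{n+1}|q|^{2n}/(\pi n!) = (\nu/\pi)e^{\nu|q|^2} < \infty$), so pairing with $h_n^\nu \in L^2(\R;dx)$ may be done term by term. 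The paper's computation, by contrast, is entirely elementary, needs no limiting argument, and in fact gives an independent verification of that expansion on the basis elements.
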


 \begin{proof}
 By definition of $\mathcal{B}_\nu$, we can write
\begin{align*}
   \mathcal{B}_\nu (h_n^\nu)(q) 
   & =  \left(\frac{\nu}{\pi}\right)^{\frac{3}{4}}\int_{\mathbb{R}}e^{\frac{-\nu}{2}(q^2+x^2)+\nu \sqrt{2}qx} (-1)^{n}e^{\frac{\nu}{2} x^2}\frac{d^{n}}{dx^{n}}\left(e^{-\nu x^2}\right)dx \\
     & =  \left(\frac{\nu}{\pi}\right)^{\frac{3}{4}}(-1)^{n}e^{\frac{-\nu}{2} q^2}
     \int_{\mathbb{R}}e^{\nu \sqrt{2}qx}  \frac{d^{n}}{dx^{n}}\left(e^{-\nu x^2}\right)dx.
\end{align*}
Integration by parts yields
\begin{align*}
\mathcal{B}_\nu (h_n^\nu)(q) &= \left(\frac{\nu}{\pi}\right)^{\frac{3}{4}}(-1)^{n-1}e^{\frac{-\nu}{2} q^2}
\int_{\mathbb{R}}\frac{d}{dx} \left(e^{\nu \sqrt{2}qx}\right) \frac{d^{n-1}}{dx^{n-1}}\left(e^{-\nu x^2}\right)dx\\
&= \left(\frac{\nu}{\pi}\right)^{\frac{3}{4}}\nu \sqrt{2}q \int_{\mathbb{R}}e^{\frac{-\nu}{2}(q^2+x^2)+\nu \sqrt{2}qx} (-1)^{n-1}e^{\frac{\nu}{2} x^2}\frac{d^{n-1}}{dx^{n-1}}\left(e^{-\nu x^2}\right)dx .
\end{align*}
This shows that  $\mathcal{B}_\nu (h_n^\nu)(q)=\nu \sqrt{2}q \mathcal{B}_\nu (h_{n-1}^\nu)(q)$ and therefore
$$
\mathcal{B}_\nu (h_n^\nu)(q)= \left(\nu \sqrt{2}q\right)^n \mathcal{B}_\nu (h_{0}^\nu)(q)
 $$
holds by induction. Now, making use of \eqref{Gaussianintegral}, we obtain
$$
\mathcal{B}_\nu (h_n^\nu) (q) =  \left(\nu \sqrt{2}q\right)^n \left(\frac{\nu}{\pi}\right)^{\frac{3}{4}}e^{-\frac{\nu}{2}q^2}\int_{\mathbb{R}}e^{-\nu x^2+\nu \sqrt{2}qx}dx
  = \left(\frac{\nu}{\pi}\right)^{\frac{1}{4}} 2^{\frac{n}{2}} \nu^n q^n.
$$
Using this, it follows
 \begin{align*}
\norm{\mathcal{B}_\nu (h_n^\nu)}_{\mathcal{F}^{2,\nu}_{slice}(\Hq)}^2
= \left(\frac{\nu}{\pi}\right)^{\frac{1}{2}} 2^{n} \nu^{2n} \norm{e_n}^2
=  \left(\frac{\nu}{\pi}\right)^{\frac{1}{2}} 2^{n} \nu^{2n} \frac{\pi n!}{\nu^{n+1}}
= \norm{h_n^\nu}_{L^2(\mathbb{R};dx)}^2,
\end{align*}
thanks to the explicit expressions of the square norms  $\norm{e_n}^2_{\mathcal{F}^{2,\nu}_{slice}(\Hq)}$ and $\norm{h_n^\nu}_{L^2(\mathbb{R};dx)}^2$ given by \eqref{spmonomials} and \eqref{normhn}, respectively.
 \end{proof}

\begin{remark}
The previous result can be reworded by saying that the quaternionic Segal-Bargmann transform $\mathcal{B}_\nu $ maps the orthonormal basis  $$\psi_n^\nu(x):= \frac{h_n^\nu(x)}{\norm{h_n^\nu}_{L^2(\mathbb{R};dx)}}= \left(\frac{\nu}{\pi}\right)^{\frac{1}{4}} \frac{h_n^\nu(x)}{2^{\frac{n}{2}}\nu^{n/2}\sqrt{n!}} $$ of
$L^2(\mathbb{R};dx)$
to the orthonormal basis $ \nu^{n/2}\left(\frac{\nu}{\pi n!}\right)^{1/2} e_n$ of the quaternionic Bargmann-Fock space $\mathcal{F}^{2,\nu}_{slice}(\Hq)$.
\end{remark}

  As immediate consequence, we prove the main result of this section.

 \begin{theorem}
 The quaternionic Segal-Bargmann transform $\mathcal{B}_\nu$ realizes a surjective isometry from the Hilbert space $L^2(\mathbb{R};dx)$ onto the slice hyperholomorphic Bargmann-Fock space $\mathcal{F}^{2,\nu}_{slice}(\Hq)$.
 \end{theorem}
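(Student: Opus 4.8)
The plan is to exhibit $\mathcal{B}_\nu$ as a right $\Hq$-linear map that sends a complete orthonormal system of $L^2(\mathbb{R};dx)$ onto a complete orthonormal system of $\mathcal{F}^{2,\nu}_{slice}(\Hq)$; once this is in place, the surjective isometry follows by the usual Hilbert-space reasoning, adapted to the noncommutative (right-linear) setting. The ingredients are already assembled: the normalized Hermite functions $\psi_n^\nu$ form an orthonormal basis of $L^2(\mathbb{R};dx)$, and by Lemma \ref{action} together with the subsequent remark one has $\mathcal{B}_\nu(\psi_n^\nu) = \nu^{n/2}\left(\tfrac{\nu}{\pi n!}\right)^{1/2} e_n$, where the right-hand side runs over the orthonormal basis of $\mathcal{F}^{2,\nu}_{slice}(\Hq)$ coming from \eqref{spmonomials}. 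I will denote this latter basis by $(\widetilde{e}_n)_n$.

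First I would record that $\mathcal{B}_\nu$ is right $\Hq$-linear: in the integral \eqref{defQSBT} a scalar $\lambda\in\Hq$ multiplying $\psi$ on the right sits to the right of the $\Hq$-valued kernel, so $\mathcal{B}_\nu(\psi\lambda) = \mathcal{B}_\nu(\psi)\lambda$, and additivity is immediate. Next, for $\psi\in L^2(\mathbb{R};dx)$ I expand $\psi = \sum_n \psi_n^\nu c_n$ with $c_n = \scal{\psi,\psi_n^\nu}_{L^2(\mathbb{R};dx)}$, so that Parseval gives $\norm{\psi}_{L^2(\mathbb{R};dx)}^2 = \sum_n |c_n|^2$. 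On a finite partial sum $S_N = \sum_{n\leq N}\psi_n^\nu c_n$ right-linearity yields exactly $\mathcal{B}_\nu(S_N) = \sum_{n\leq N}\widetilde{e}_n c_n$, whence $\norm{\mathcal{B}_\nu(S_N)}_{\mathcal{F}^{2,\nu}_{slice}(\Hq)}^2 = \sum_{n\leq N}|c_n|^2$. Since the tails $\sum_{M<n\leq N}|c_n|^2$ vanish, the sequence $(\mathcal{B}_\nu(S_N))_N$ is Cauchy in the complete space $\mathcal{F}^{2,\nu}_{slice}(\Hq)$ and converges to some $g = \sum_n \widetilde{e}_n c_n$ with $\norm{g}^2 = \sum_n |c_n|^2 = \norm{\psi}_{L^2(\mathbb{R};dx)}^2$. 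Identifying $g$ with $\mathcal{B}_\nu(\psi)$ then gives the isometry. Surjectivity follows by reversing the construction: any $f\in\mathcal{F}^{2,\nu}_{slice}(\Hq)$ expands as $f = \sum_n \widetilde{e}_n d_n$ with $\sum_n|d_n|^2<\infty$, and the function $\psi = \sum_n \psi_n^\nu d_n$, which converges in $L^2(\mathbb{R};dx)$, satisfies $\mathcal{B}_\nu(\psi)=f$.

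The main obstacle is the identification $g = \mathcal{B}_\nu(\psi)$, i.e.\ reconciling the norm-limit $g$ in $\mathcal{F}^{2,\nu}_{slice}(\Hq)$ with the value of the defining integral $\mathcal{B}_\nu(\psi)$. Here Proposition \ref{19} does the work: the pointwise bound $\left|\mathcal{B}_\nu(S_N-\psi)(q)\right|\leq \left(\tfrac{\nu}{\pi}\right)^{1/2} e^{\frac{\nu}{2}|q|^2}\norm{S_N-\psi}_{L^2(\mathbb{R};dx)}$ shows $\mathcal{B}_\nu(S_N)(q)\to \mathcal{B}_\nu(\psi)(q)$ for every fixed $q$, while norm convergence $\mathcal{B}_\nu(S_N)\to g$ in $\mathcal{F}^{2,\nu}_{slice}(\Hq)$ forces the same pointwise convergence through the continuity of the evaluation functional $\delta_q$ established earlier. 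Hence $g(q)=\mathcal{B}_\nu(\psi)(q)$ for all $q$, so $\mathcal{B}_\nu(\psi)\in\mathcal{F}^{2,\nu}_{slice}(\Hq)$ with $\norm{\mathcal{B}_\nu(\psi)}_{\mathcal{F}^{2,\nu}_{slice}(\Hq)}=\norm{\psi}_{L^2(\mathbb{R};dx)}$. Beyond this point the only care required is bookkeeping of the quaternionic coefficients on the correct (right) side throughout the Parseval computation, which the right-linearity of both the transform and the inner products renders routine.
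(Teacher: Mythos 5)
Your proof is correct and follows essentially the same route as the paper's: isometry on the Hermite basis (Lemma \ref{action}) extended to finite sums by right linearity, passage to the limit using completeness of $\mathcal{F}^{2,\nu}_{slice}(\Hq)$, identification of the norm limit with $\mathcal{B}_\nu(\psi)$ via the pointwise bound of Proposition \ref{19}, and surjectivity by pulling the monomial expansion of $f$ back through the transform. The only (harmless) deviation is in the identification step, where you invoke continuity of the evaluation functionals $\delta_q$ instead of the paper's almost-everywhere convergent subsequence; both rest on the same estimates.
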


 \begin{proof} Notice first that the result
 $$ \norm{\mathcal{B}_\nu (h_n^\nu)}_{\mathcal{F}^{2,\nu}_{slice}(\Hq)} =\norm{h_n^\nu}_{L^2(\mathbb{R};dx)},$$
 in Lemma \ref{action}, can be extended to finite sums of $h_{n}^\nu c_n$ by linearity of $\mathcal{B}_\nu$ and the orthogonality of $\mathcal{B}_\nu(h_{n}^\nu)$ in $\mathcal{F}^{2,\nu}_{slice}(\Hq)$.  That is
 \begin{align}
    \norm{\mathcal{B}_\nu\left(\sum\limits_{n=N+1}^{N+k} h_n^\nu c_n \right) }_{\mathcal{F}^{2,\nu}_{slice}(\Hq)}^2
     &=\sum\limits_{n=N+1}^{N+k} |c_n|^2 \norm{ h_n^\nu }_{L^2(\mathbb{R};dx)}^2   \nonumber
    \\& =\norm{\sum\limits_{n=N+1}^{N+k} h_n^\nu c_n }_{L^2(\mathbb{R};dx)}^2 . \label{finiteiso}
    \end{align}
Now, any $\psi\in{L^2(\mathbb{R};dx)}$ can be expanded as $\psi= \sum\limits_{n=0}^{\infty} h_n^\nu c_n$; $(c_n)_n \subset {\Hq}$ for $\lbrace{h_k^\nu; k\geq 0}\rbrace$ being a basis of $L^2(\mathbb{R};dx)$.
   The sequence $\psi_N :=  \sum_{n=0}^{N} h_n^\nu c_n$ converges to $\psi$ in $L^2(\mathbb{R};dx)$-norm and therefore
   $(\mathcal{B}_\nu\psi_N)_N$ is a Cauchy sequence in the Hilbert space $\mathcal{F}^{2,\nu}_{slice}(\Hq)$.
    Indeed, from \eqref{finiteiso} we get
    \begin{align*}
    \norm{\mathcal{B}_\nu\psi_N-\mathcal{B}_\nu\psi_M}_{\mathcal{F}^{2,\nu}_{slice}(\Hq)}
    &=\norm{\mathcal{B}_\nu(\psi_N-\psi_M)}_{\mathcal{F}^{2,\nu}_{slice}(\Hq)}
     \\& =\norm{\psi_N-\psi_M}_{L^2(\mathbb{R};dx)} \underset{N,M\rightarrow{\infty}}\longrightarrow 0.
    \end{align*}
Therefore, there exists $f$ in $\mathcal{F}^{2,\nu}_{slice}(\Hq)$ such that $(\mathcal{B}_\nu\psi_N)_N$ converges to $f$ in the norm of $\mathcal{F}^{2,\nu}_{slice}(\Hq)$. Subsequently, there exists a subsequence $(\mathcal{B}_\nu\phi_{N_k})_{k}$ of $(\mathcal{B}_\nu\phi_{N})_{N}$ converging to $f$, almost everywhere. However, thanks to Proposition \ref{19}, we establish the following estimation
 $$
 \left|{\mathcal{B}_\nu\psi_N(q)- \mathcal{B}_\nu\psi(q)}\right|= \left|\mathcal{B}_\nu(\psi_N - \psi)(q) \right|\leq \left(\frac{\nu}{\pi}\right)^{1/2}e^{\frac{\nu}{2}\left|{q}\right|^{2}}\norm{\psi_N-\psi}_{L^2(\mathbb{R};dx)}.
 $$
 Thus, the sequence $(\mathcal{B}_\nu\psi_N)_N$ converges pointwise to $\mathcal{B}_\nu\psi$ and therefore 
 $\mathcal{B}_\nu\psi=f:=\underset{N\rightarrow{\infty}}\lim{\mathcal{B}_\nu (\psi)_N}.$
Accordingly, for every $\psi\in{L^2(\mathbb{R};dx)}$ we have
 \begin{align*}
 \norm{\mathcal{B}_\nu\psi}_{\mathcal{F}^{2,\nu}_{slice}(\Hq)}
 &=\Vert{\underset{N\rightarrow{\infty}}\lim{\mathcal{B}_\nu (\psi)_N}}\Vert_{\mathcal{F}^{2,\nu}_{slice}(\Hq)}
 \\& =\underset{N\rightarrow{\infty}}\lim\norm{\mathcal{B}_\nu (\psi)_N}_{\mathcal{F}^{2,\nu}_{slice}(\Hq)}
 \\& = \underset{N\rightarrow{\infty}}\lim\norm{\psi_N}_{L^2(\mathbb{R};dx)}
 \\& =\norm{\psi}_{L^2(\mathbb{R};dx)}.
 \end{align*}
   This shows that $\mathcal{B}_\nu$ is an isometry from $L^2(\mathbb{R};dx)$ into $\mathcal{F}^{2,\nu}_{slice}(\Hq)$.

 The injectivity is obvious from the isometry property. The proof that $\mathcal{B}_\nu$ is surjective relies on the fact that (Lemma \ref{action})
 $$e_n(q)=
 \left(\frac{\pi}{\nu}\right)^{\frac{1}{4}}\frac{1}{2^{n/2}\nu^n}  \mathcal{B}_\nu (h_n^\nu)(q)=a_{n,\nu}\mathcal{B}_\nu (h_n^\nu)(q).$$
  Indeed, since any $f\in{\mathcal{F}^{2,\nu}_{slice}(\Hq)}$
can be expanded as $f=  \sum_{n=0}^{\infty}e_n c_n$, we get
\begin{align*}
f(q)= \sum_{n=0}^{\infty} a_{n,\nu}\mathcal{B}_\nu (h_n^\nu)(q) c_n= \mathcal{B}_\nu\left( \sum_{n=0}^{\infty}h_n^\nu a_{n,\nu}c_n\right)(q) = \mathcal{B}_\nu (\phi)(q) ,
\end{align*}
where $\phi$ stands for $\phi:= \sum\limits_{n=0}^{\infty} h_n^\nu a_{n,\nu}c_n \in L^2(\mathbb{R};dx)$.
The proof is completed.
 \end{proof}

According to the previous proof, the transform $\mathcal{B}_\nu$ admits an inverse mapping $\mathcal{F}^{2,\nu}_{slice}(\Hq)$ onto $L^2(\mathbb{R};dx)$ and given by the expansion
\begin{align} \label{inverseExp}
[\mathcal{B}_\nu]^{-1}(f)(x) := \sum_{n=0}^{\infty} \frac{\scal{f,e_n}_{\mathcal{F}^{2,\nu}_{slice}(\Hq)}}{\norm{h_n}_{L^2(\mathbb{R};dx)} \norm{e_n}_{\mathcal{F}^{2,\nu}_{slice}(\Hq)} }  h_n(x) =  \left(\frac{\pi}{\nu}\right)^{\frac{1}{4}}  \sum_{n=0}^{\infty} \frac{c_n}{2^{\frac{n}{2}}\nu^{n}}  h_n(x)
\end{align}
for $f := \sum\limits_{n=0}^{\infty} e_n^\nu c_n \in \mathcal{F}^{2,\nu}_{slice}(\Hq)$.
Moreover, one can show the following result.

 \begin{theorem}\label{thm:intrepInverse}
 The inverse transform $[\mathcal{B}_\nu]^{-1}$, mapping unitarily $\mathcal{F}^{2,\nu}_{slice}(\Hq)$ onto $L^2(\mathbb{R};dx)$, is given by the integral representation
 \begin{align} \label{inverseExpIntRep}
[\mathcal{B}_\nu]^{-1}(f)(x) :=
 \left(\frac{\nu}{\pi}\right)^{3/4} \int_{\C_I}  e^{\frac{-\nu}{2}(\overline{q}^2+x^2)+\nu \sqrt{2}\overline{q}x} f(q) e^{-\nu |q|^2} d\lambda_I(q).
\end{align}
 \end{theorem}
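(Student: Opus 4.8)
The plan is to show that the integral on the right-hand side of \eqref{inverseExpIntRep} reproduces the series expansion \eqref{inverseExp} already obtained for $[\mathcal{B}_\nu]^{-1}$. First I would observe that the kernel in \eqref{inverseExpIntRep} is nothing but the slicewise conjugate of the Segal--Bargmann kernel: for $q=u+vI\in\C_I$ one has $\overline{q}^{\,2}=\overline{q^2}$ and $\overline{q}\,x=\overline{qx}$ (since $x\in\R$), so that, by \eqref{KerneFct}, $\left(\frac{\nu}{\pi}\right)^{3/4} e^{\frac{-\nu}{2}(\overline{q}^2+x^2)+\nu\sqrt{2}\,\overline{q}x}=\overline{A(q;x)}$. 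Hence the claimed formula reads $[\mathcal{B}_\nu]^{-1}(f)(x)=\int_{\C_I}\overline{A(q;x)}\,f(q)\,e^{-\nu|q|^2}\,d\lambda_I(q)$, which is the Bargmann--Fock pairing of $f$ against the generating kernel $A(\cdot\,;x)$. This is the expected shape, since $\mathcal{B}_\nu$ being a surjective isometry forces $[\mathcal{B}_\nu]^{-1}=\mathcal{B}_\nu^{*}$, and the adjoint of the integral operator with kernel $A(q;x)$ carries the conjugate kernel $\overline{A(q;x)}$.

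To make this precise I would argue by truncation. Writing $f=\sum_{n\geq 0}e_n c_n$ subject to the growth condition \eqref{GC}, I first treat the partial sums $f_N=\sum_{n=0}^{N}e_n c_n$, which are polynomials, and it suffices to evaluate the integral on each monomial $e_n(q)=q^n$. Expanding the kernel by Proposition \ref{GenFct} as $\overline{A(q;x)}=\sum_{m\geq 0}\frac{h_m^\nu(x)}{\norm{h_m^\nu}_{L^2(\mathbb{R};dx)}\,\norm{e_m}_{\mathcal{F}^{2,\nu}_{slice}(\Hq)}}\,\overline{q}^{\,m}$ and integrating term by term, the monomial orthogonality \eqref{spmonomials}, namely $\int_{\C_I}\overline{q}^{\,m}q^{n}e^{-\nu|q|^2}d\lambda_I(q)=\frac{\pi m!}{\nu^{m+1}}\delta_{m,n}=\norm{e_m}^2_{\mathcal{F}^{2,\nu}_{slice}(\Hq)}\delta_{m,n}$, selects only $m=n$; this relation follows at once by passing to polar coordinates on $\C_I$, the angular integration annihilating all off-diagonal terms. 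The term-by-term integration is legitimate by dominated convergence, because once the growth of $A$ is accounted for the integrand still carries a net Gaussian decay against $e^{-\nu|q|^2}$. This yields $\int_{\C_I}\overline{A(q;x)}\,q^n e^{-\nu|q|^2}d\lambda_I(q)=\frac{\norm{e_n}}{\norm{h_n^\nu}}h_n^\nu(x)$, and evaluating $\norm{e_n}/\norm{h_n^\nu}=\left(\frac{\pi}{\nu}\right)^{1/4}2^{-n/2}\nu^{-n}$ from \eqref{spmonomials} and \eqref{normhn} matches the $n$-th term of \eqref{inverseExp}. Keeping the quaternionic coefficients $c_n$ on the right and summing then gives exactly $[\mathcal{B}_\nu]^{-1}(f_N)(x)$.

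It remains to pass to the limit $N\to\infty$. On the series side this is immediate: $f_N\to f$ in $\mathcal{F}^{2,\nu}_{slice}(\Hq)$ and $[\mathcal{B}_\nu]^{-1}$ is the isometry of the preceding theorem, so $[\mathcal{B}_\nu]^{-1}(f_N)\to[\mathcal{B}_\nu]^{-1}(f)$ in $L^2(\mathbb{R};dx)$. On the integral side I would control the tail $\int_{\C_I}\overline{A(q;x)}\,(f-f_N)(q)\,e^{-\nu|q|^2}d\lambda_I(q)$ with the help of the identity $\norm{A_q}_{L^2(\mathbb{R};dx)}=\left(\frac{\nu}{\pi}\right)^{1/2}e^{\frac{\nu}{2}|q|^2}$ of Proposition \ref{AqKq} together with the evaluation estimate of the reproducing kernel, so that \eqref{inverseExpIntRep} is identified as the $L^2(\mathbb{R};dx)$-limit of the absolutely convergent truncated integrals.

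I expect the genuine obstacle to be exactly this last interchange of the full series with the integration over the slice. For a general $f\in\mathcal{F}^{2,\nu}_{slice}(\Hq)$ the factor $|f(q)|$ may grow like $e^{\frac{\nu}{2}|q|^2}$, which against $|\overline{A(q;x)}|e^{-\nu|q|^2}$ leaves no decay in the imaginary direction, so the integrand need not be absolutely integrable on $\C_I$ and Fubini cannot be applied naively. The truncation argument above, in which each polynomial $f_N$ restores full Gaussian decay and the angular integration removes the cross terms before any limit is taken, is precisely the device that circumvents this difficulty, the integral in \eqref{inverseExpIntRep} being understood as the resulting $L^2(\mathbb{R};dx)$-limit. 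Everything else is the bookkeeping of the real Hermite norms and the noncommutative ordering of the coefficients $c_n$, which is routine.
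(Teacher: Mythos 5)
Your argument is correct and its core is the same as the paper's: recognize the kernel in \eqref{inverseExpIntRep} as $\overline{A(q;x)}$, so that the integral is the Fock pairing $\scal{f,A(\cdot;x)}_{\mathcal{F}^{2,\nu}_{slice}(\Hq)}$, evaluate it on the monomials $e_n$ by combining the expansion of Proposition \ref{GenFct} with the orthogonality relations \eqref{spmonomials}, use $\norm{e_n}_{\mathcal{F}^{2,\nu}_{slice}(\Hq)}/\norm{h_n^\nu}_{L^2(\mathbb{R};dx)}=\left(\pi/\nu\right)^{1/4}2^{-n/2}\nu^{-n}$, and match the outcome with the series \eqref{inverseExp}. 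Where you genuinely add something is on convergence. The paper's proof passes from monomials to a general $f=\sum_n e_n a_n$ ``by linearity'', i.e.\ it moves the pairing through an infinite sum without comment; this step is not innocent, because $A(\cdot;x)$ is not itself an element of $\mathcal{F}^{2,\nu}_{slice}(\Hq)$ (its would-be square norm $\sum_n |h_n^\nu(x)|^2/\norm{h_n^\nu}_{L^2(\mathbb{R};dx)}^2$ diverges), and, as you observe, against the growth bound $|f(q)|\leq\left(\nu/\pi\right)^{1/2}e^{\nu|q|^2/2}\norm{f}_{\mathcal{F}^{2,\nu}_{slice}(\Hq)}$ the integrand of \eqref{inverseExpIntRep} retains no decay in the imaginary direction of $\C_I$, so absolute convergence can indeed fail. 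Your truncation scheme --- evaluate on the polynomials $f_N$, where term-by-term integration enjoys genuine Gaussian decay and the angular integration kills the cross terms, then invoke the isometry of the preceding theorem to identify \eqref{inverseExpIntRep} as the $L^2(\mathbb{R};dx)$-limit of the truncated integrals --- supplies exactly the justification the paper leaves implicit; the only price is that the integral in the statement must then be read in this limiting sense rather than as an absolutely convergent Lebesgue integral for every $f$. So: same route as the paper, executed with more care at its one delicate point.
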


  \begin{proof}
$\mathcal{B}_\nu$ is the integral operator associated to the generating function of the Hermite polynomials and can be rewritten as
$$ \mathcal{B}_\nu (\psi)(q) = \scal{ \psi,\overline{A_q}}_{L^2(\mathbb{R};dx)}.$$
This suggests the consideration of the transform
 $$ \widetilde{\mathcal{B}_\nu} (f)(x) := \scal{f,A(\cdot;x)}_{\mathcal{F}^{2,\nu}_{slice}(\Hq)}$$
 for $f\in \mathcal{F}^{2,\nu}_{slice}(\Hq)$. Then, using orthogonality of the monomials in $\mathcal{F}^{2,\nu}_{slice}(\Hq)$,
  it can easily be seen that
  $$ \widetilde{\mathcal{B}_\nu} (e_n)(x) = \frac{\norm{e_n}_{\mathcal{F}^{2,\nu}_{slice}(\Hq)}}{\norm{h_n^\nu}_{L^2(\mathbb{R};dx)}} h_n^\nu(x) =
  \left(\frac{\pi}{\nu}\right)^{1/4} \frac{1}{2^{n/2} \nu^n} h_n^\nu(x).$$
  More generally, by proceeding as for $\mathcal{B}_\nu$ using the linearity of $\widetilde{\mathcal{B}_\nu}$ and the fact that the $h_n^\nu$ take real values, we check that
   \begin{align*}
   \widetilde{\mathcal{B}_\nu} (f)(x) =  \sum\limits_{n=0}^\infty \widetilde{\mathcal{B}_\nu} (e_n)(x) a_n
  = \left(\frac{\pi}{\nu}\right)^{1/4} \sum\limits_{n=0}^\infty \frac{a_n}{2^{n/2} \nu^n} h_n^\nu(x)
   \end{align*}
   for every $f = \sum\limits_{n=0}^\infty e_n a_n$ belonging to $\mathcal{F}^{2,\nu}_{slice}(\Hq)$. Therefore, the expression of $\widetilde{\mathcal{B}_\nu} (f)(x)$ coincides with the one of $[\mathcal{B}_\nu]^{-1}(f)(x)$ given through \eqref{inverseExp}. This completes the proof.
  \end{proof}

  \begin{remark}
  The proof of Theorem \ref{thm:intrepInverse} can be handled directly using the well-established fact
  $\scal{\psi,h_n^\nu}_{L^2(\mathbb{R};dx)} = \alpha_n \norm{h_n^\nu}_{L^2(\mathbb{R};dx)}^2 $
   for every $\psi=\sum\limits_{n=0}^\infty    h_n^\nu \alpha_n\in L^2(\mathbb{R};dx)$. Indeed,
   we have
      \begin{align*}
   \widetilde{\mathcal{B}_\nu} (\mathcal{B}_\nu (\psi))(x)
    & =  \scal{\psi(\cdot) , \scal{A((\cdot\cdot;x), A(\cdot\cdot;\cdot)}_{\mathcal{F}^{2,\nu}_{slice}(\Hq)}} _{L^2(\mathbb{R};dx)}
  \\& =   \sum\limits_{n=0}^\infty \frac{ \scal{\psi,h_n^\nu}_{L^2(\mathbb{R};dx)} }{ \norm{h_n^\nu}_{L^2(\mathbb{R};dx)}^2 } h_n^\nu(x)
   \\& = \psi(x).
   \end{align*}
  \end{remark}

  We conclude this section by showing the connection to the one-dimensional quaternionic Fourier transform \cite{EllLeBihanSangwine2014} defined on $L^1(\R;dx)=L^1(\R;\Hq)$ by
  $$\mathcal{F}_I(\psi)(x) = \int_{\R} e^{I x y} \psi(y) dy $$
  for given $I\in \mathbb{S}$. Indeed, starting from Definition \ref{defQSBT} of $\mathcal{B}_\nu (\psi)$ restricted to $I\R$ we get
   \begin{align*}
 \mathcal{B}_\nu (\psi)\left(\frac{Ix}{\sqrt{2}\nu}\right)
 = \left(\frac{\nu}{\pi}\right)^{\frac{3}{4}}   e^{\frac{x^2}{4\nu}} \int_{\mathbb{R}} e^{\frac{-\nu}{2} y^2+ I xy}\psi(y)dy.
\end{align*}
 for every $\psi \in L^2(\R;dx)$. Thus, we assert

    \begin{proposition} \label{prop:QSBTQFT}
The quaternionic Segal-Bargmann transform is closely related to the left one-dimensional quaternionic Fourier transform. More precisely, for
 any $I \in \mathbb{S}$, real $x\in \R$ and $\psi \in L^2(\R;dx)$, we have
   \begin{align} \label{QSBTQFT}
 \mathcal{B}_\nu (\psi)\left(\frac{Ix}{\sqrt{2}\nu}\right)
 = \left(\frac{\nu}{\pi}\right)^{\frac{3}{4}}   e^{\frac{x^2}{4\nu}} \mathcal{F}_I\left(e^{\frac{-\nu}{2} y^2}\psi\right)(x)
\end{align}
  \end{proposition}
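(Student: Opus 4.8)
The plan is to insert the specific point $q = \frac{Ix}{\sqrt{2}\nu}$ directly into the integral definition \eqref{defQSBT} of $\mathcal{B}_\nu$ and to carry out the entire computation inside the single slice $\C_I$, where all the quantities involved commute. To avoid a clash of notation with the external real variable, I would first rename the integration variable in \eqref{defQSBT} from $x$ to $y$, so that
\[\mathcal{B}_\nu(\psi)(q) = \left(\frac{\nu}{\pi}\right)^{3/4}\int_{\R} e^{-\frac{\nu}{2}(q^2 + y^2) + \nu\sqrt{2}qy}\,\psi(y)\,dy.\]
Next I would substitute $q = \frac{Ix}{\sqrt{2}\nu}$ and use the relation $I^2=-1$ to evaluate the two $q$-dependent pieces of the exponent. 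One finds $q^2 = -\frac{x^2}{2\nu^2}$, hence $-\frac{\nu}{2}q^2 = \frac{x^2}{4\nu}$, while $\nu\sqrt{2}\,qy = Ixy$. Substituting these back recovers exactly the intermediate identity displayed just before the statement.

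The only point that requires genuine care is the noncommutativity of $\Hq$, and this is where I would focus the argument. Because the chosen point $q$ belongs to $\C_I = \R + \R I$, the three quantities $q^2$, $y^2$ and $qy$ all lie in the commutative subfield $\C_I$, which is isomorphic to $\C$; consequently the exponential $e^{-\frac{\nu}{2}(q^2+y^2)+\nu\sqrt{2}qy}$ factors, exactly as in the complex case, as the product $e^{\frac{x^2}{4\nu}}\,e^{-\frac{\nu}{2}y^2}\,e^{Ixy}$. The scalar prefactor $\left(\frac{\nu}{\pi}\right)^{3/4} e^{\frac{x^2}{4\nu}}$ is real, hence central, so it can be pulled outside the integral, leaving $\int_{\R} e^{-\frac{\nu}{2}y^2}\,e^{Ixy}\,\psi(y)\,dy$.

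To conclude, I would move the real Gaussian $e^{-\frac{\nu}{2}y^2}$ past $e^{Ixy}$---legitimate precisely because it is a real scalar and therefore commutes with every quaternion---rewriting the integrand as $e^{Ixy}\bigl(e^{-\frac{\nu}{2}y^2}\psi(y)\bigr)$, which is exactly the integrand of $\mathcal{F}_I$ applied to the function $e^{-\frac{\nu}{2}y^2}\psi$. This yields \eqref{QSBTQFT}. For well-posedness I would also remark that $e^{-\frac{\nu}{2}y^2}\psi \in L^1(\R)$ whenever $\psi \in L^2(\R;dx)$, by Cauchy--Schwarz against the $L^2$-Gaussian, so that the right-hand side $\mathcal{F}_I\bigl(e^{-\frac{\nu}{2}y^2}\psi\bigr)$ is well defined. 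Thus the whole difficulty is the bookkeeping of the order of the noncommutative factors; once one checks that every factor involved is either real or lies in $\C_I$, the identity follows by direct substitution.
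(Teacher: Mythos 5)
Your proof is correct and takes essentially the same route as the paper, which likewise obtains \eqref{QSBTQFT} by substituting $q=\frac{Ix}{\sqrt{2}\nu}$ into the integral definition \eqref{defQSBT} and simplifying the exponent $-\frac{\nu}{2}q^{2}=\frac{x^{2}}{4\nu}$, $\nu\sqrt{2}\,qy=Ixy$ inside the commutative slice $\C_I$. Your extra care about the ordering of noncommutative factors and the remark that $e^{-\frac{\nu}{2}y^{2}}\psi\in L^{1}(\R)$ by Cauchy--Schwarz are sound refinements of the same argument, not a different approach.
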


  Moreover, we have the following important result for the special value of $\nu=1$. In this case, we use the notation $\mathcal{B}$ instead of $\mathcal{B}_1$ and $\mathcal{F}^{2}_{slice}(\Hq)$ instead of $\mathcal{F}^{2,1}_{slice}(\Hq)$.

      \begin{theorem}\label{prop:diagcomm}
      For every $f \in \mathcal{F}^{2}_{slice}(\Hq)$, $I \in \mathbb{S}$ and real $ x\in \R$, we have the following identity
          \begin{align} \label{diagcomm}
           \mathcal{B} \mathcal{F}_I \mathcal{B}^{-1} (f)\left(x\right) = \sqrt{2\pi}  f\left(Ix\right).
 \end{align}
 \end{theorem}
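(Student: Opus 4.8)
The plan is to verify \eqref{diagcomm} first on the orthogonal basis of monomials $e_n(q)=q^n$ and then to propagate it by right-$\Hq$-linearity and continuity. Since $\mathcal{B}$ is a surjective isometry its inverse is one as well, and by Plancherel $\mathcal{F}_I$ is $\sqrt{2\pi}$ times an isometry of $L^2(\R;dx)$; hence the composite $T:=\mathcal{B}\,\mathcal{F}_I\,\mathcal{B}^{-1}$ is a bounded right-linear operator on $\mathcal{F}^{2}_{slice}(\Hq)$. As $\mathcal{F}^{2}_{slice}(\Hq)$ is a reproducing kernel space, the evaluations $f\mapsto f(x)$ and $f\mapsto f(Ix)$ are continuous, so both sides of \eqref{diagcomm} depend continuously on $f$; it therefore suffices to check \eqref{diagcomm} for $f=e_n$ and then sum against the coefficients $c_n$ for a general $f=\sum_n e_n c_n$.

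On the monomials I would invoke Lemma \ref{action}: with $\nu=1$ it reads $\mathcal{B}(h_n^{1})(q)=(1/\pi)^{1/4}2^{n/2}q^n$, whence $\mathcal{B}^{-1}(e_n)=\pi^{1/4}2^{-n/2}h_n^{1}$. The decisive input is the eigenfunction relation $\mathcal{F}_I(h_n^{1})=\sqrt{2\pi}\,I^{n}h_n^{1}$. Granting it, and using that $h_n^{1}$ is real-valued together with the right-linearity of $\mathcal{B}$, I obtain
\[
T(e_n)(q)=\mathcal{B}\!\left(h_n^{1}\,\pi^{1/4}2^{-n/2}\sqrt{2\pi}\,I^{n}\right)(q)=\mathcal{B}(h_n^{1})(q)\,\pi^{1/4}2^{-n/2}\sqrt{2\pi}\,I^{n}=\sqrt{2\pi}\,q^{n}I^{n}.
\]
Evaluating at a real point $q=x$, where $x$ commutes with $I$, gives $T(e_n)(x)=\sqrt{2\pi}\,x^{n}I^{n}=\sqrt{2\pi}(Ix)^{n}=\sqrt{2\pi}\,e_n(Ix)$, which is precisely \eqref{diagcomm} for $f=e_n$. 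The general case then follows by the continuity and linearity reduction of the first paragraph.

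The one genuinely nontrivial point, and the main obstacle, is the eigenfunction relation $\mathcal{F}_I(h_n^{1})=\sqrt{2\pi}\,I^{n}h_n^{1}$. Because each $h_n^{1}$ is real-valued, $\mathcal{F}_I(h_n^{1})(x)=\int_{\R}e^{Ixy}h_n^{1}(y)\,dy$ takes its values in the slice $\C_I=\R+\R I$, which is a field isomorphic to $\C$ under $I\leftrightarrow i$; the integral then reduces verbatim to the classical Hermite--Fourier eigenrelation $\int_{\R}e^{ixy}h_n^{1}(y)\,dy=\sqrt{2\pi}\,i^{n}h_n^{1}(x)$ (recall $h_n^{1}(x)=H_n(x)e^{-x^{2}/2}$). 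If one prefers a self-contained derivation, I would instead insert the generating-function expansion of Proposition \ref{GenFct} into the Gaussian integral \eqref{Gaussianintegral}: a direct computation for $q\in\C_I$, using $(Iq)^2=-q^2$, gives $\int_{\R}e^{Ixy}A(q;y)\,dy=\sqrt{2\pi}\,A(Iq;x)$, and matching the coefficients of $q^{n}$ (legitimate on $\C_I\cong\C$) recovers the eigenrelation. One should also note that all term-by-term manipulations are justified by the $L^2$-convergence of the Hermite expansions and the boundedness of the three operators involved, which closes the argument.
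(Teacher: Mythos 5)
Your proof is correct, but it takes a genuinely different route from the paper's. The paper never touches the Hermite eigenfunctions: it computes $\mathcal{B}(\mathcal{F}_I\psi)(q)$ for $q\in\C_I$ by a direct Gaussian-integral manipulation, evaluates at $q=x/\sqrt{2}$, and then recognizes the result as $\sqrt{2\pi}\,\mathcal{B}(\psi)\left(Ix/\sqrt{2}\right)$ via its Proposition \ref{prop:QSBTQFT}; setting $\psi=\mathcal{B}^{-1}f$ and renaming $x/\sqrt{2}$ as $x$ finishes. You instead diagonalize everything: Lemma \ref{action} gives $\mathcal{B}^{-1}(e_n)=\pi^{1/4}2^{-n/2}h_n^{1}$, the Fourier--Hermite eigenrelation $\mathcal{F}_I(h_n^{1})=\sqrt{2\pi}\,I^{n}h_n^{1}$ (legitimate on $\C_I\cong\C$ because $h_n^{1}$ is real-valued, and, as you note, alternatively derivable inside the paper's toolkit from Proposition \ref{GenFct} and the kernel identity $\int_{\R}e^{Ixy}A(q;y)\,dy=\sqrt{2\pi}\,A(Iq;x)$) yields $T(e_n)(q)=\sqrt{2\pi}\,q^{n}I^{n}$, and evaluation at real $x$, where $x$ commutes with $I$, gives $\sqrt{2\pi}(Ix)^{n}$; density of the monomial span together with continuity of $T$ and of point evaluations then handles general $f$. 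Your route buys structural insight: it exhibits the theorem as the statement that $\mathcal{B}$ conjugates $\mathcal{F}_I$ into the rotation $f\mapsto f(I\,\cdot)$ precisely because both operators are diagonal in the intertwined bases $\{h_n^{1}\}$ and $\{e_n\}$, and it isolates the noncommutative caveat that the identity is asserted only at real points (indeed $T(f)(q)=\sqrt{2\pi}\sum_n q^{n}I^{n}c_n$ need not equal $\sqrt{2\pi}f(Iq)$ off the real axis). The paper's route is shorter, stays entirely within the integral computations already set up, and needs neither the eigenrelation nor a Plancherel theorem for $\mathcal{F}_I$ as separate inputs. One point you should make explicit: the paper defines $\mathcal{F}_I$ only on $L^1(\R;\Hq)$, so your appeal to ``Plancherel'' should be phrased as defining the $L^2$ extension of $\mathcal{F}_I$ by continuity from the Hermite span (or from $L^1\cap L^2$), where the factor $\sqrt{2\pi}$ follows component-wise from the splitting $\Hq=\C_I\oplus\C_I j$; this is routine, and the paper's own statement carries the same implicit need, since $\mathcal{B}^{-1}f$ is merely square integrable.
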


  \begin{proof}
  Direct computation shows that
  \begin{align*}
  \mathcal{B}_\nu (\mathcal{F}_I \psi)\left(q\right) =  \sqrt{2}  \left(\frac{\nu}{\pi}\right)^{\frac{1}{4}} e^{\frac{\nu}{2}q^2}
                                             \int_{\R} e^{\sqrt{2}Iqy} \left(e^{\frac{-y^2}{2\nu} }\psi(y)\right) dy
  \end{align*}
  for any $q\in \C_I$. This yields in particular
  \begin{align*}
  \mathcal{B}_\nu (\mathcal{F}_I \psi)\left(\frac{x}{\sqrt{2}}\right)
  =  \sqrt{2}  \left(\frac{\nu}{\pi}\right)^{\frac{1}{4}} e^{\frac{\nu}{4}x^2}\mathcal{F}_I \left(e^{\frac{-y^2}{2\nu} }\psi\right)(x).
  \end{align*}
   Now, by means of Proposition \ref{prop:QSBTQFT} (with $\nu=1$) we get
  \begin{align*}
  \mathcal{B} (\mathcal{F}_I \psi)\left(\frac{x}{\sqrt{2}}\right)
   =  \sqrt{2}  \left(\frac{1}{\pi}\right)^{\frac{1}{4}} e^{\frac{x^2}{4}}  \mathcal{F}_I \left(e^{\frac{-y^2}{2} }\psi\right)(x)
   = \sqrt{2\pi}   \mathcal{B}(\psi)\left(\frac{Ix}{\sqrt{2}}\right).
  \end{align*}
  Finally, the result of Proposition \ref{prop:diagcomm} follows by taking $\psi = \mathcal{B}^{-1} f$ and replacing $x/\sqrt{2}$ by $x$.
  \end{proof}

  \begin{remark}
  Proposition \ref{prop:QSBTQFT} (resp. \ref{prop:diagcomm}) constitutes the quaternionic version of its  well-known  analogous for the classical Segal-Bargmann and Fourier transforms, see for example \cite[Eq. (1.2)]{Hall2001} (resp. \cite[Theorem 3]{ZhuDictionary}).
  \end{remark}
\section{Conclusion}

In this paper we have introduced and studied basic properties of a quaternionic analogue of the classical Segal-Bargmann transform for the slice hyperholomorphic Bargmann-Fock space. We also gave an integral representation of its inverse and established the connection to a one-dimensional quaternionic Fourier transform.
 In a forthcoming work, we plan to provide a systematic study of the full hyperholomorphic Bargmann-Fock space
 $$\mathcal{F}^{2,\nu}_{full}(\Hq):=\mathcal{SR}(\Hq)\cap L^2(\Hq;e^{-\nu\vert{q}\vert^2}d\lambda)$$
 considered as a subspace of the Hilbert space $L^2(\Hq;e^{-\nu\vert{q}\vert^2}d\lambda )$ of all square integrable functions with respect to the Gaussian measure on $\Hq$, $d\lambda$ being the Lebesgue measure on $\Hq=\R^4$.
 The main aim is to suggest a suitable orthogonal basis and an appropriate Segal-Bargmann transform.

\quad

\noindent{\bf Acknowledgement:}
The authors are most grateful to the anonymous referees whose deep and extensive comments greatly contributed to improve this paper.
They also would like to thank Professor Sabadini I. for her important advice and remarks, and Professor Sangwine S.J. for pointing out the book \cite{EllLeBihanSangwine2014}.
The assistance of the members of the seminars ``Partial differential equations and spectral geometry" is gratefully acknowledged.
The second author is partially supported by the Hassan II Academy of Sciences and Technology.

\end{document}